 \newtheorem{thm}{Theorem}[section]
 \newtheorem{cor}[thm]{Corollary}
 \newtheorem{lem}[thm]{Lemma}
 \newtheorem{prop}[thm]{Proposition}
 \theoremstyle{definition}
 \newtheorem{defn}[thm]{Definition}
 \newtheorem{ex}[thm]{Example}
 \theoremstyle{remark}
 \newtheorem{rem}[thm]{Remark}
 \numberwithin{equation}{section}
\begin{document}

%
%
%
%
%
%
%
%
%

\title[$K$-Fusion Frames]
 {Characterization and Construction of $K$-Fusion Frames and Their Duals  in Hilbert Spaces}



\author[F. Arabyani Neyshaburi]{Fahimeh Arabyani Neyshaburi}
\address{Department of Mathematics and Computer Sciences, Hakim Sabzevari University, Sabzevar, Iran.}
\email{arabianif@yahoo.com}

\author[A.  Arefijamaal]{Ali Akbar Arefijamaal}
\address{Department of Mathematics and Computer Sciences, Hakim Sabzevari University, Sabzevar, Iran.}
\email{arefijamaal@hsu.ac.ir}

\subjclass{Primary 42C15; Secondary 42C40, 41A58.}

\vspace{1.7cm}
\begin{abstract}
$K$-frames, a new generalization of frames, were recently considered  by L. G$\breve{\textrm{a}}$vru\c{t}a  in connection with atomic systems and some problems arising in sampling theory. Also, fusion frames are an important  generalization of frames, applied  in  a  variety of applications.
In the present paper, we introduce the notion of $K$-fusion frames in Hilbert spaces and obtain several approaches for identifying of $K$-fusion frames. The main  purpose is to reconstruct the elements from the range of the bounded operator $K$ on  a  Hilbert space $\mathcal{H}$  by using a family of closed subspaces in $\mathcal{H}$. This work will  be useful in some problems in sampling theory which are processed by fusion frames. For this end, we present some descriptions for duality of $K$-fusion frames and also resolution of the operator $K$ to provide simple and concrete constructions of duals of $K$-fusion frames. Finally, we survey the robustness of $K$-fusion frames under some perturbations.
\end{abstract}

\maketitle
\textbf{Key words:} Fusion frames; $K$-fusion frames; $K$-duals; resolution of bounded operators.
\maketitle
\section{Introduction and preliminaries}

\smallskip
\goodbreak
Frame theory presents  efficient algorithms for a wide range of applications \cite{Ar13, Ben06, Bod05, Bol98, Cas00}.
In most of  those applications, we deal with  dual frames to reconstruct the modified data and compare it with the original data. In contrast to frames, a new approach so called atomic decomposition for a closed subspace $\mathcal{H}_{0}$ of a Hilbert space $\mathcal{H}$ introduced by  Feichtinger et al.  in \cite{Han} 
 with frame-like properties. However, the sequences in  atomic decompositions
do not necessarily  belong to $\mathcal{H}_{0}$,  this striking property is valuable especially   in sampling theory   \cite{Paw, Werthrt}. Then $K$-frames were introduced  to study atomic systems with respect to a bounded operator $K\in B(\mathcal{H})$ \cite{Gav07}. Indeed, $K$-frames   are equivalent with atomic systems for the operator $K$  and  help us to reconstruct elements from the range of a bounded linear operator $K$ in a separable Hilbert space.
More precisely, let  $\mathcal{H}$ be a separable Hilbert space and $I$  a countable index set, a sequence $F:=\lbrace f_{i}\rbrace_{i \in I} \subseteq \mathcal{H}$ is called a $K$-frame for $\mathcal{H}$, if there exist constants $A, B > 0$ such that
\begin{eqnarray}\label{001}
A \Vert K^{*}f\Vert^{2} \leq \sum_{i\in I} \vert \langle f,f_{i}\rangle\vert^{2} \leq B \Vert f\Vert^{2}, \quad (f\in \mathcal{H}).
\end{eqnarray}
Clearly, if $K=I_{\mathcal{H}}$, then $F$ is an ordinary frame and so $K$-frames arise  as a generalization of the ordinary frames  \cite{Cas00, Chr08, Gav07}. The constants $A$ and $B$ in $(\ref{001})$ are called the lower and the upper bounds of $F$, respectively. Similar to ordinary frames the synthesis operator can be defined as $T_{F}: l^{2}\rightarrow \mathcal{H}$; $T_{F}(\{ c_{i}\}_{i\in I}) = \sum_{i\in I} c_{i}f_{i}$. It is a bounded operator and its adjoint which is called the analysis operator given by $T_{F}^{*}(f)= \{ \langle f,f_{i}\rangle\}_{i\in I}$, and the frame operator is given by $S_{F}: \mathcal{H} \rightarrow \mathcal{H}$; $S_{F}f = T_{F}T_{F}^{*}f = \sum_{i\in I}\langle f,f_{i}\rangle f_{i}$. Unlike ordinary frames, the frame operator of a K-frame is not invertible in general. However,  if $K$ has close range then $S_{F}$ from $R(K)$ onto $S_{F}(R(K))$ is an invertible operator \cite{Xiao}.

The authors in \cite{arefi3} introduced the notion of duality for $K$-frames and presented some methods for construction and characterization of $K$-frames and their duals. Indeed, a Bessel sequence
 $\{g_{i}\}_{i \in I}\subseteq \mathcal{H}$ is called a \textit{$K$-dual} of $\{ f_{i} \}_{i\in I}$ if
\begin{eqnarray}\label{dual1}
Kf = \sum_{i\in I} \langle f,g_{i}\rangle f_{i}, \quad (f\in \mathcal{H}).
\end{eqnarray}
 For further information in $K$-frame theory  we refer the reader  to  \cite{arefi3, Han, Gav07,  Xiao}. The following result  is useful for the proof of  our main results.


\begin{thm}[Douglas   \cite{Douglas}]\label{equ0}
Let $L_{1}\in B(\mathcal{H}_{1}, \mathcal{H})$ and $L_{2}\in B(\mathcal{H}_{2}, \mathcal{H})$ be bounded linear mappings on given Hilbert spaces. Then the following assertions are equivalent:
\begin{itemize}
\item[(i)]
$R(L_{1}) \subseteq R(L_{2})$;
\item[(ii)]
$L_{1}L_{1}^{*} \leq \lambda^{2}L_{2}L_{2}^{*}$, \quad for some $\lambda > 0$;
\item[(iii)]
There exists a bounded linear mapping $X\in L(\mathcal{H}, \mathcal{H}_{2})$, such that $L_{1} = L_{2}X$.
\end{itemize}
Moreover, if (i), (ii) and (iii) are valid, then there exists a unique operator $X$ so that 
\begin{itemize}
\item[(a)]
$\Vert X\Vert ^{2} = \inf \{\alpha>0, L_{1}L_{1}^{*}\leq \alpha L_{2}L_{2}^{*}\}$;
\item[(b)]
$N( L_{1}) = N(X)$;
\item[(c)]
$R(X) \subset \overline{R( L_{2}^{*})} $.
\end{itemize}

\end{thm}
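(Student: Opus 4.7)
The plan is to close a cycle of implications by first noting that (iii) $\Rightarrow$ (i) is immediate from $L_1 h = L_2(Xh) \in R(L_2)$, and (iii) $\Rightarrow$ (ii) follows from $L_1 L_1^* = L_2(XX^*)L_2^* \leq \|X\|^2 L_2 L_2^*$, since $XX^* \leq \|X\|^2 I$. The content therefore lies in producing $X$ from either (i) or (ii), and in extracting the supplementary clauses (a)--(c) from the construction used.

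For (ii) $\Rightarrow$ (iii), assuming $L_1 L_1^* \leq \lambda^{2} L_2 L_2^*$, I would build a bounded operator $Y: \mathcal{H} \to \mathcal{H}_1$ with $Y L_2^* = L_1^*$ and then set $X = Y^*$. The natural candidate is the map $Y_0: L_2^* h \mapsto L_1^* h$ defined on $R(L_2^*)$. Rewriting the operator inequality as $\|L_1^* h\|^2 \leq \lambda^{2}\|L_2^* h\|^2$ simultaneously handles well-definedness (apply it to $h_1-h_2$ whenever $L_2^* h_1 = L_2^* h_2$) and the norm bound $\|Y_0\|\leq \lambda$. This is the main obstacle, and it turns entirely on using one inequality to perform both jobs at once. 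Extending $Y_0$ by continuity to $\overline{R(L_2^*)}$ and by zero on the orthogonal complement $N(L_2)$ produces the desired $Y$; adjoining $Y L_2^* = L_1^*$ gives $L_1 = L_2 Y^*$, and by construction $R(X) = R(Y^*) \subseteq \overline{R(L_2^*)}$, which already establishes clause (c).

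For (i) $\Rightarrow$ (iii), I would set $Xh$ equal to the unique element of $N(L_2)^{\perp} = \overline{R(L_2^*)}$ whose image under $L_2$ is $L_1 h$; existence comes from (i), and uniqueness from the injectivity of $L_2$ restricted to $N(L_2)^{\perp}$. Linearity is routine and boundedness follows from the closed graph theorem, using that $N(L_2)^{\perp}$ is closed. Uniqueness of the operator $X$ with range in $\overline{R(L_2^*)}$ is then automatic, since any two such factorizations differ by an operator whose values lie in $\overline{R(L_2^*)} \cap N(L_2) = \{0\}$.

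The supplementary clauses fall out of the same constructions: (c) is built in; (b) holds because $0$ is the unique element of $N(L_2)^{\perp}$ mapped to $0$ by $L_2$, so $Xh = 0 \iff L_1 h = 0$; for (a), the direction (iii) $\Rightarrow$ (ii) delivers $L_1 L_1^* \leq \|X\|^2 L_2 L_2^*$, showing $\|X\|^2$ is an admissible $\alpha$, while for any admissible $\alpha$ the construction in (ii) $\Rightarrow$ (iii) returns an operator of norm at most $\sqrt{\alpha}$ which, by the uniqueness established above, coincides with $X$.
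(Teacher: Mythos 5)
Your proposal is correct: it reproduces, in essentially the standard form, Douglas's original argument (factor through $Y_0\colon L_2^*h\mapsto L_1^*h$ for (ii)$\Rightarrow$(iii), solve $L_2(Xh)=L_1h$ in $N(L_2)^{\perp}$ with the closed graph theorem for (i)$\Rightarrow$(iii), and extract (a)--(c) from the uniqueness of the factor with range in $\overline{R(L_2^*)}$). The paper itself gives no proof of this statement --- it is quoted from Douglas's paper as a tool --- so there is nothing to compare beyond noting that your route is the classical one; the only blemish is notational: the auxiliary operator should be $Y\colon\mathcal{H}_2\to\mathcal{H}_1$ (not $\mathcal{H}\to\mathcal{H}_1$), so that $X=Y^*\in B(\mathcal{H}_1,\mathcal{H}_2)$, mirroring the typo already present in item (iii) of the paper's statement.
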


Fusion frame theory is a fundamental mathematical theory   introduced in  \cite{Cas04} to model sensor networks perfectly. 
Although, recent studies shows that fusion frames provide effective frameworks  not only for modeling of sensor networks but also   for signal and image processing, sampling theory, filter banks and a variety of applications that cannot be modeled by discrete frames \cite{Cas08, sensor, hear}. In the following, we review basic definitions and  results of fusion frames.

 Let $\{W_i\}_{i\in I}$ be a family of closed subspaces of $\mathcal{H}$ and $\{\omega_i\}_{i\in I}$ a family of
weights, i.e. $\omega_i>0$, $i\in I$. Then $\{(W_i,\omega_i)\}_{i\in
I}$ is called a \textit{fusion frame} for $\mathcal{H}$ if there exist the
constants $0<A\leq B<\infty$ such that
\begin{eqnarray}\label{Def. fusion}
A\|f\|^{2}\leq \sum_{i\in I}\omega_i^2\|\pi_{W_i}f\|^2\leq
B\|f\|^{2},\qquad (f\in \mathcal{H}),
\end{eqnarray}
where $\pi_{W_{i}}$ denotes the orthogonal projection from Hilbert space $\mathcal{H}$ onto a closed subspace $W_{i}$. The constants $A$ and $B$ are called the \textit{fusion frame
bounds}. If $\omega_i=1$, for all $i$, $W$ is called uniform fusion frame and we denote it by $\lbrace
W_{i}\rbrace_{i\in I}$. Also, if we only have the upper bound in (\ref{Def. fusion}) we
call $\{(W_i,\omega_i)\}_{i\in I}$ a \textit{Bessel fusion
sequence}. 
Recall that for each sequence $\{W_i\}_{i\in I}$ of closed subspaces
in $\mathcal{H}$, the space
\begin{eqnarray*}
\sum_{i\in I}\oplus W_{i} =\left\{\{f_i\}_{i\in I}:f_i\in W_i,
\sum_{i\in I}\|f_i\|^2<\infty\right\},
\end{eqnarray*}
 with the inner product $\left\langle \{f_i\}_{i\in I},\{g_i\}_{i\in I} \right\rangle=\sum_{i\in
I}\langle f_i,g_i \rangle$ is a Hilbert space.

For a Bessel fusion sequence $W := \{(W_i,\omega_i)\}_{i\in I}$ of
$\mathcal{H}$, the \textit{synthesis operator} $T_{W}: \sum_{i\in
I}\oplus W_{i} \rightarrow\mathcal{H}$ is defined by
\begin{equation*}
T_{W}\left(\{f_i\}_{i\in I}\right)=\sum_{i\in I}\omega_if_i,\qquad
\left(\{f_{i}\}_{i\in I}\in \sum_{i\in I}\oplus W_{i}\right).
\end{equation*}
Its adjoint operator $T_{W}^{*}: \mathcal{H}\rightarrow \sum_{i\in
I}\oplus W_{i}$, which is called the \textit{analysis
operator}, is given by
\begin{eqnarray*}
T_{W}^{*}(f)=\left\{\omega_{i}\pi_{W_{i}}(f)\right\}_{i\in I},\qquad (f\in
\mathcal{H}).\end{eqnarray*}
and the \textit{fusion frame operator}
$S_{W}:\mathcal{H}\rightarrow\mathcal{H}$ is defined by $S_{W
}f=\sum_{i\in I}\omega_i^{2}\pi_{W_i}f$, which is a bounded,
invertible and positive operator \cite{Cas04}.

There are some approaches towards
 dual fusion frames, the
first definition  was presented by P.
G$\breve{\textrm{a}}$vru\c{t}a in \cite{Gav02}.
A Bessel fusion sequence $\{(V_i,\nu_i)\}_{i\in I}$ is
called a \textit{dual} fusion frame of $\{(W_i,\omega_i)\}_{i\in I}$ if
\begin{eqnarray}\label{Def:alt}
f=\sum_{i\in I}\omega_{i}\nu_{i}\pi_{V_i}S_{W}^{-1}\pi_{W_i}f,\qquad
(f\in \mathcal{H}).
\end{eqnarray}
The family $\{(S_{W}^{-1}W_i,\omega_i)\}_{i\in I}$, which is also a
fusion frame, is called the \textit{canonical dual} of
$\{(W_i,\omega_i)\}_{i\in I}$. A general approach to dual fusion frames can be found in \cite{Hei15, Hei14}.

Throughout this paper, we suppose  $\mathcal{H}$ is a separable Hilbert space, $K^{\dag}$ the pseudo inverse of operator $K$, $I$ a countable index set and $I_{\mathcal{H}}$ is the identity operator on $\mathcal{H}$. For two Hilbert spaces $\mathcal{H}_{1}$ and $\mathcal{H}_{2}$ we denote by $B(\mathcal{H}_{1},\mathcal{H}_{2})$ the collection of all bounded linear operators between $\mathcal{H}_{1}$ and $\mathcal{H}_{2}$, and we abbreviate $B(\mathcal{H},\mathcal{H})$ by $B(\mathcal{H})$. Also we denote the range of $K\in B(\mathcal{H})$ by $R(K)$, the null space of $K$ by $N(K)$ and the orthogonal projection of $\mathcal{H}$ onto a closed subspace $V \subseteq \mathcal{H}$  by $\pi_{V}$.

The paper is organized as follows. In
Section 2, we describe the notion of $K$-fusion frames and present several methods for  identifying and constructing of $K$-fusion frames.
 Section 3 deals with the duality of $K$-fusion frames, in this section, we introduce  the notion of  dual for $K$-fusion frames and then  we show that in case $K=I_{\mathcal{H}}$ this definition  coincides  with the concept of dual fusion frames, however there are several essentially differences, which we will discuss.  Also, we  present some characterizations for duals of  $K$-fusion frames. Section 4 is devoted to introduce the concept of resolution of a bounded linear operator $K\in B(\mathcal{H})$.  By applying this notion we  obtain more reconstructions from the elements of  $R(K)$.  Finally, we survey the  robustness of $K$-fusion frames and their duals under some perturbations,  in Section 5.

\smallskip
\goodbreak
\section{$K$-fusion frames}

\smallskip
\goodbreak

In this section, we introduce the notion of $K$-fusion frames
in Hilbert spaces and discuss on some their properties. In particular, we present some
approaches for identifying and constructing of $K$-fusion frames. Let us start our consideration with formal definition of $K$-fusion frames.
\begin{defn}\label{kfusion5} Let $\lbrace W_{i}\rbrace_{i\in I}$ be a family
of closed subspaces of $\mathcal{H}$ and $\{\omega_i\}_{i\in I}$   a family
of weights, i.e. $\omega_i>0$, $i\in I$. We call
$W=\lbrace(W_{i},\omega_{i})\rbrace_{i\in I}$ a \textit{$K$-fusion frame} for
$\mathcal{H}$, if there exist positive constants $0 < A, B < \infty$ such
that
\begin{eqnarray}\label{kfusion} A\|K^{*}f\|^{2}\leq \sum_{i\in
I}\omega_i^2\|\pi_{W_i}f\|^2\leq B\|f\|^{2},\qquad (f\in \mathcal{H}).
\end{eqnarray} \end{defn}
The constants $A$ and $B$ in (\ref{kfusion}) are called  lower and
upper bounds of $W$, respectively.
We call $W$ a \textit{minimal $K$-fusion frame}, whenever $W_{i}\cap \overline{span}_{j\in I, j\neq i}W_{j} = \{0\}$  and it is called \textit{exact}, if  for every $j\in I$ the sequence $\lbrace (W_{i},\omega_{i})\rbrace_{i\in I, i\neq j}$ is not a $K$- fusion frame for $\mathcal{H}$.
Obviously, a $K$-fusion frame is a Bessel fusion sequence
and so the synthesis operator, the analysis operator and the frame operator
of $W$ are defined similar to fusion frames, however  for a  $K$-fusion frame, the synthesis operator  is not onto and the frame
operator is not invertible, in general. Furthermore, there are several other differences between fusion frames and $K$-fusion frames. Indeed,  the closed linear span of  $W_{i}$'s which contains $R(K)$ by Theorem \ref{equ0}, is not equal to $\mathcal{H}$.  Also, the following example  shows that,  unlike fusion frames, a minimal $K$-fusion frame  is not necessarily
required to be exact. Take $\mathcal{H} = \mathbb{R}^{4}$ with the orthonormal basis $\{e_{i}\}_{i=1}^{4}$ and
\begin{eqnarray*}
W_{1} = span\{e_{1},e_{2}\} , \quad W_{2} = span\{e_{3}\}.
 \end{eqnarray*}
Define $K\in B(\mathcal{H})$ as
\begin{eqnarray*}
Ke_{1} = e_{1}, \quad Ke_{2} = e_{1}, \quad Ke_{3} = e_{2}.
 \end{eqnarray*}
Then $W=\{(W_{i}, 1)\}_{i=1}^{2}$ is a minimal $K$-fusion frame with the bounds $1/2$ and  $1$. However,  it is not exact  since $\{(W_{1}, 1)\}$ is also a $K$-fusion frame with the same  bounds. In this paper, we will recognize  more differences and similarities of $K$-fusion frames with fusion frames.

\begin{prop}\label{2.3.}
Suppose that $\lbrace(W_{i},\omega_{i})\rbrace_{i\in I}$ is a Bessel fusion sequence
and $K\in B(\mathcal{H})$ is  a closed range operator. The following statements are equivalent.
\item[(i)] The sequence
 $W=\{(W_{i}, \omega_{i})\}_{i\in I}$  is a $K$-fusion frame for $\mathcal{H}$.
\item[(ii)]
 There exists a positive number  $A$  such that $S_{W}\geq AKK^{*}$.
\end{prop}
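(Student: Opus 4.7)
The plan is to observe that both sides of the claimed equivalence are just two languages for the same lower frame-type inequality, with the Bessel hypothesis taking care of the upper bound already.

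First I would rewrite the sum in the defining inequality (\ref{kfusion}) in terms of the fusion frame operator. Since $\pi_{W_i}$ is a self-adjoint projection, $\|\pi_{W_i} f\|^2 = \langle \pi_{W_i} f, f\rangle$, and summability (coming from the Bessel assumption) lets me interchange sum and inner product to obtain
\begin{equation*}
\sum_{i\in I}\omega_i^{2}\|\pi_{W_i}f\|^{2} = \Big\langle \sum_{i\in I}\omega_i^{2}\pi_{W_i}f,\, f\Big\rangle = \langle S_W f, f\rangle,\qquad (f\in\mathcal{H}).
\end{equation*}
Simultaneously, $\|K^{*}f\|^{2}=\langle KK^{*}f,f\rangle$ for every $f\in\mathcal{H}$.

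With these two identities in hand, the lower bound in (\ref{kfusion}), namely $A\|K^{*}f\|^{2}\leq \sum_{i\in I}\omega_i^{2}\|\pi_{W_i}f\|^{2}$ for all $f\in\mathcal{H}$, becomes $\langle AKK^{*}f,f\rangle \leq \langle S_W f, f\rangle$ for all $f\in\mathcal{H}$, which is exactly the operator inequality $S_W \geq AKK^{*}$ (both $S_W$ and $KK^{*}$ are bounded self-adjoint operators, so the pointwise inequality of quadratic forms is equivalent to the operator inequality). The upper bound in (\ref{kfusion}) is automatic because $W$ is assumed to be Bessel. Thus (i) and (ii) are equivalent, with the same constant $A$ transferring from one formulation to the other.

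There is no real obstacle here, since both implications are a direct translation between quadratic forms and operator inequalities; the closed-range hypothesis on $K$ is not strictly needed for this equivalence but is harmless, and is presumably included so that subsequent arguments (e.g. invoking Theorem \ref{equ0} to control $R(K)\subseteq R(S_W^{1/2})$ or to construct explicit preimages) may be applied to the $K$-fusion frames produced by this proposition.
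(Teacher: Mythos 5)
Your argument is correct and is essentially the same as the paper's own proof: both translate the lower inequality of (\ref{kfusion}) into the quadratic-form statement $A\langle KK^{*}f,f\rangle\leq\langle S_{W}f,f\rangle$ and identify this with the operator inequality $S_{W}\geq AKK^{*}$, with the Bessel hypothesis supplying the upper bound. Your side remark that the closed-range assumption on $K$ is not actually used here is also accurate.
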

\begin{proof}
Since $W$ is a Bessel fusion sequence, so it  is a $K$-fusion frame for $\mathcal{H}$  if and only if there exists $A>0$ such that
 \begin{eqnarray*}
A  \langle KK^{*}f, f\rangle  \leq\sum_{i\in I}\omega_{i}^{2}\Vert \pi_{W_{i}}f \Vert^{2}= \langle S_{W}f, f\rangle,
 \end{eqnarray*}
 for every $f\in \mathcal{H}$, or  equivalently $S_{W}\geq AKK^{*}$.
\end{proof}
Notice that,  if $F$ is a Bessel sequence, unlike frames and fusion frames, invertibility of  the frame operator $S_{F}: R(K)\rightarrow S_{F}(R(K))$ does not imply that $F$ is a $K$-frame. For a simple counterexample,   let $K$  be the orthogonal projection onto the subspace generated by $(\frac{1}{\sqrt{2}}, \frac{1}{\sqrt{2}})$. Then $F=\{(1,0)\}$ is not a $K$-frame for  $\mathbb{C}^{2}$, however the operator  $S_{F}:R(K)\rightarrow S_{F}(R(K))$ is invertible.

The following lemmas are necessary for our results.
\begin{lem}\cite{Cas08}\label{2.9}
Let $V$ be a closed subspace of $\mathcal{H}$ and $T$ be a bounded operator
on $\mathcal{H}$. Then \begin{eqnarray}
 \pi_{V}T^{*} = \pi_{V}T^{*}\pi_{\overline{TV}}.
 \end{eqnarray}
\end{lem}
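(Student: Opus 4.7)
The plan is to establish the identity $\pi_{V} T^{*} = \pi_{V} T^{*} \pi_{\overline{TV}}$ pointwise on $\mathcal{H}$. First I would fix an arbitrary $f \in \mathcal{H}$ and decompose it orthogonally with respect to the closed subspace $\overline{TV}$, writing
\begin{equation*}
f = \pi_{\overline{TV}} f + g, \qquad g := f - \pi_{\overline{TV}} f \in (\overline{TV})^{\perp}.
\end{equation*}
Applying $T^{*}$ yields $T^{*} f = T^{*} \pi_{\overline{TV}} f + T^{*} g$, so after applying $\pi_{V}$ the lemma reduces to proving $\pi_{V} T^{*} g = 0$.

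The only substantive step is then to verify $T^{*} g \in V^{\perp}$. For any $v \in V$, the adjoint relation gives $\langle T^{*} g, v \rangle = \langle g, T v \rangle$, and since $T v \in T V \subseteq \overline{TV}$ while $g \in (\overline{TV})^{\perp}$, this inner product vanishes. Hence $T^{*} g \perp V$, so $\pi_{V} T^{*} g = 0$, which combined with the decomposition above completes the argument.

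There is essentially no obstacle here: the proof is a one-line orthogonality computation built around the defining property of adjoints and the fact that $TV$ sits inside its closure. The identity is really just the observation that, when computing $\pi_{V} T^{*} f$, the only part of $f$ that survives is the part lying in $\overline{TV}$, since the complementary part is annihilated by $\pi_{V} T^{*}$.
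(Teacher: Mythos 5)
Your proof is correct, and it is essentially the standard argument: the paper itself states this lemma without proof, citing \cite{Cas08}, where the same orthogonality computation (namely that $T^{*}$ maps $(\overline{TV})^{\perp}$ into $V^{\perp}$ via $\langle T^{*}g, v\rangle = \langle g, Tv\rangle = 0$) is used. Nothing is missing; the decomposition $f = \pi_{\overline{TV}}f + g$ together with $\pi_{V}T^{*}g = 0$ gives the identity exactly as intended.
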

The next  lemma  was shown for fusion frames in \cite{Ruiz}, although we prove it by a  simple  method.
\begin{lem}\label{invert4.}
Let $T\in B(\mathcal{H}_{1}, \mathcal{H}_{2})$ be an invertible operator and  $W=\lbrace (W_{i}, \omega_{i})\rbrace_{i\in I}$ be a Bessel fusion sequence of $\mathcal{H}_{1}$. Then   $\lbrace (TW_{i}, \omega_{i})\rbrace_{i\in I}$ is  a Bessel fusion sequence of $\mathcal{H}_{2}$.
\end{lem}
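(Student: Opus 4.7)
The plan is to show directly that the series $\sum_{i\in I}\omega_i^2\|\pi_{TW_i}g\|^2$ is bounded above by a multiple of $\|g\|^2$ for every $g\in\mathcal{H}_2$, using Lemma \ref{2.9} to transfer the projections onto the subspaces $TW_i$ back into projections onto the original subspaces $W_i$, where the Bessel bound $B$ is available.

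First I would observe that because $T$ is invertible with bounded inverse, it is a homeomorphism, so each $TW_i$ is a closed subspace of $\mathcal{H}_2$ and the orthogonal projection $\pi_{TW_i}$ is well defined. Next, I would apply Lemma \ref{2.9} to the closed subspace $V:=TW_i$ and the bounded operator $T^{-1}\in B(\mathcal{H}_2,\mathcal{H}_1)$, noting that $\overline{T^{-1}(TW_i)}=W_i$. This yields the key identity
\begin{equation*}
\pi_{TW_i}(T^*)^{-1}=\pi_{TW_i}(T^*)^{-1}\pi_{W_i}.
\end{equation*}

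The third step is the substitution that makes everything line up. For an arbitrary $g\in\mathcal{H}_2$, write $g=(T^*)^{-1}(T^*g)$ and apply the identity above with $h=T^*g$ to obtain
\begin{equation*}
\pi_{TW_i}g=\pi_{TW_i}(T^*)^{-1}\pi_{W_i}(T^*g),
\end{equation*}
from which $\|\pi_{TW_i}g\|\le \|T^{-1}\|\,\|\pi_{W_i}T^*g\|$. Squaring, summing over $i$, and using that $W$ is Bessel with some bound $B$ then gives
\begin{equation*}
\sum_{i\in I}\omega_i^2\|\pi_{TW_i}g\|^2\le \|T^{-1}\|^2\sum_{i\in I}\omega_i^2\|\pi_{W_i}T^*g\|^2\le \|T^{-1}\|^2\|T\|^2 B\,\|g\|^2,
\end{equation*}
so $\{(TW_i,\omega_i)\}_{i\in I}$ is Bessel in $\mathcal{H}_2$ with bound $\|T\|^2\|T^{-1}\|^2 B$.

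I do not expect any step here to be a serious obstacle; the only real subtlety is picking the right instance of Lemma \ref{2.9} (applying it to $T^{-1}$ on the subspace $TW_i$ rather than to $T$ on $W_i$), since the naive choice gives an inequality in the wrong direction. Once that choice is made, the estimate is a direct computation using the boundedness of $T$ and $T^{-1}$ together with the Bessel hypothesis on $W$.
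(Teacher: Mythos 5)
Your proof is correct and follows essentially the same route as the paper: both factor $f=(T^{-1})^{*}T^{*}f$, apply Lemma \ref{2.9} with the subspace $TW_i$ and the operator $T^{-1}$ to insert $\pi_{W_i}$, and then use the Bessel bound of $W$ to obtain the bound $\|T\|^{2}\|T^{-1}\|^{2}B$. No differences worth noting.
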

\begin{proof}
By applying  Lemma \ref{2.9} and the fact that $T$ is invertible, we obtain
\begin{eqnarray*}  \sum_{i\in I}\omega_{i}^{2}\Vert \pi_{TW_{i}}f \Vert^{2}&=&  \sum_{i\in I}\omega_{i}^{2}\Vert \pi_{TW_{i}}(T^{-1})^{*}T^{*}f\Vert^{2} \\
&=&\sum_{i\in I}\omega_{i}^{2}\Vert  \pi_{TW_{i}}(T^{-1})^{*}\pi_{W_{i}}T^{*}f\Vert^{2} \\
&\leq& \Vert T^{-1}\Vert^{2}\sum_{i\in I}\omega_{i}^{2}\Vert \pi_{W_{i}}T^{*}f \Vert^{2}\\
&\leq& B\Vert T^{-1}\Vert^{2} \Vert T\Vert^{2}\Vert f \Vert^{2},
 \end{eqnarray*}
for each $f\in \mathcal{H}_{2}$, as required.
\end{proof}


\begin{thm}\label{4part}
Let $K\in B(\mathcal{H})$ be a closed range operator and  $W=\lbrace (W_{i}, \omega_{i})\rbrace_{i\in I}$  a $K$-fusion frame for
$\mathcal{H}$ with bounds $A$ and $B$, respectively. Then
\begin{itemize}
\item[(i)] If $\lbrace\pi_{R(K)}W_{i},\omega_{i})\rbrace_{i\in I}$ is a Bessel fusion sequence, then $\lbrace (K^{\dag}W_{i},
    \omega_{i})\rbrace_{i\in I}$ is a fusion
    frame for $R(K^{*})$.
\item[(ii)] If $\lbrace\pi_{S_{W}(R(K))}W_{i},\omega_{i})\rbrace_{i\in I}$ is a Bessel fusion  sequence, then  $\lbrace
    (S_{W}^{-1}\pi_{S_{W}(R(K))}W_{i},
    \omega_{i})\rbrace_{i\in I}$
    is a
    fusion
    frame for $R(K)$.
    \item[(iii)]
    If $Q\in B(\mathcal{H})$ is an invertible operator, then $\lbrace (QW_{i}, \omega_{i})\rbrace_{i\in I}$ is a $QK$-fusion frame for $\mathcal{H}$.
  \item[(iv)]
    If $Q\in B(\mathcal{H})$  is an invertible operator and $KQ=QK$. Then $QW$ is also a $K$-fusion frame.
    \item[(v)]
    If $Q\in B(\mathcal{H})$ such that $R(Q)\subseteq R(K)$, then $W$ is also a $Q$-fusion frame.
\end{itemize}
\end{thm}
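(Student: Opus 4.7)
My plan is to tackle the five parts using a common template: Lemma \ref{invert4.} gives the Bessel bounds via suitable invertible maps, while the lower bounds combine the $K$-fusion frame inequality with Lemma \ref{2.9}. Parts (iii)--(v) are short. For (iii), substituting $h := Q^{*}f$ into the $K$-fusion frame inequality and applying Lemma \ref{2.9} with $T=Q$, $V=W_{i}$ yields $\|\pi_{W_{i}}Q^{*}f\| \le \|Q\|\,\|\pi_{QW_{i}}f\|$, producing the $(QK)$-fusion frame lower bound $A/\|Q\|^{2}$. Part (iv) follows from (iii) together with the identity $K^{*}Q^{*} = Q^{*}K^{*}$ (obtained by taking adjoints in $KQ=QK$) and the bound $\|K^{*}f\|\le\|Q^{-1}\|\,\|(QK)^{*}f\|$. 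Part (v) is immediate from Douglas's Theorem \ref{equ0}, which converts $R(Q)\subseteq R(K)$ into $\|Q^{*}f\|^{2}\le\lambda^{2}\|K^{*}f\|^{2}$ and hence into the $Q$-fusion frame lower bound $A/\lambda^{2}$.

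For (i), Lemma \ref{invert4.} applied to the invertible map $K^{\dag}|_{R(K)}\colon R(K)\to R(K^{*})$ (using $K^{\dag}W_{i} = K^{\dag}\pi_{R(K)}W_{i}$, since $K^{\dag}$ vanishes on $R(K)^{\perp}$) handles the Bessel bound. For the lower bound, given $f\in R(K^{*})$, I would set $h := (K^{\dag})^{*} f$, so that $K^{*}h = \pi_{R(K^{*})}f = f$ via the Moore--Penrose identity $K^{*}(K^{\dag})^{*} = (K^{\dag}K)^{*} = \pi_{R(K^{*})}$. Applying the $K$-fusion frame inequality to $h$ and then Lemma \ref{2.9} with $T=K^{\dag}$, $V=W_{i}$ (yielding $\|\pi_{W_{i}}(K^{\dag})^{*}f\| \le \|K^{\dag}\|\,\|\pi_{K^{\dag}W_{i}}f\|$) gives the lower bound $A/\|K^{\dag}\|^{2}$.

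Part (ii) is the main obstacle; the key idea is a reconstruction formula. Proposition \ref{2.3.} combined with the closed range of $K$ (whence $\|K^{*}f\|\ge\|f\|/\|K^{\dag}\|$ on $R(K)$) implies that $P:=S_{W}|_{R(K)}\colon R(K)\to S_{W}(R(K))$ is invertible, so Lemma \ref{invert4.} applied to $P^{-1}$ gives the Bessel bound. Setting $\widetilde{S}^{-1} := S_{W}^{-1}\pi_{S_{W}(R(K))}$ so that $V_{i} = \widetilde{S}^{-1}W_{i}$, I would derive
\[
f \;=\; \widetilde{S}^{-1}(S_{W}f) \;=\; \sum_{i\in I}\omega_{i}^{2}\,\widetilde{S}^{-1}\pi_{W_{i}}f,\qquad f\in R(K),
\]
pair with $f$, and use self-adjointness and idempotence of $\pi_{W_{i}}$ to obtain $\|f\|^{2} = \sum_{i}\omega_{i}^{2}\langle\pi_{W_{i}}f,\pi_{W_{i}}(\widetilde{S}^{-1})^{*}f\rangle$. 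Cauchy--Schwarz then yields
\[
\|f\|^{4} \;\le\; \Bigl(\sum_{i}\omega_{i}^{2}\|\pi_{W_{i}}f\|^{2}\Bigr)\Bigl(\sum_{i}\omega_{i}^{2}\|\pi_{W_{i}}(\widetilde{S}^{-1})^{*}f\|^{2}\Bigr),
\]
where the first factor is bounded by $B\|f\|^{2}$ from the Bessel property of $W$ and the second by $\|\widetilde{S}^{-1}\|^{2}\sum_{i}\omega_{i}^{2}\|\pi_{V_{i}}f\|^{2}$ via Lemma \ref{2.9} with $T=\widetilde{S}^{-1}$, $V=W_{i}$ (using $\overline{\widetilde{S}^{-1}W_{i}} = V_{i}$). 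This produces the lower bound $1/(B\|\widetilde{S}^{-1}\|^{2})$. The delicate point will be justifying the reconstruction formula and the identification $\widetilde{S}^{-1}W_{i} = V_{i}$, both of which rest on the restricted invertibility of $S_{W}$ on $R(K)$.
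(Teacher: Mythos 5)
Your proposal is correct. Parts (i), (iii), (iv) and (v) follow essentially the paper's own route: the Bessel bounds come from Lemma \ref{invert4.} (or the same computation done inline, as the paper does), the lower bounds from substituting $Q^{*}f$, $(K^{\dag})^{*}f$ into the $K$-fusion inequality and applying Lemma \ref{2.9}, and (v) from Douglas' theorem exactly as in the paper; your (iv) is in fact more explicit than the paper's one-line "obtained by (iii)", and the commutation argument $\|K^{*}f\|\le\|Q^{-1}\|\,\|(QK)^{*}f\|$ is the right way to fill that in. The genuine difference is in part (ii). The paper obtains the lower bound by passing to the $K$-frame $\{\omega_{i}\pi_{W_{i}}e_{j}\}_{i,j}$ built from an orthonormal basis and invoking Corollary 1 of the companion paper on $K$-frames (that $\{(K^{*})^{\dag}K^{*}S_{W}^{-1}\pi_{S_{W}(R(K))}\omega_{i}\pi_{W_{i}}e_{j}\}$ is a frame for $R(K)$), and then transfers the inequality back to the fusion setting via Lemma \ref{2.9}. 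You instead argue self-containedly: the restricted invertibility of $S_{W}$ on $R(K)$ (which follows from $S_{W}\ge AKK^{*}$ and $\|K^{*}f\|\ge\|f\|/\|K^{\dag}\|$ on $R(K)$, using $f=(K^{\dag})^{*}K^{*}f$ there) gives the reconstruction $f=\sum_{i}\omega_{i}^{2}S_{W}^{-1}\pi_{S_{W}(R(K))}\pi_{W_{i}}f$ for $f\in R(K)$, and pairing with $f$, Cauchy--Schwarz, the upper bound $B$ of $W$ and Lemma \ref{2.9} yield the explicit lower bound $\bigl(B\|S_{W}^{-1}\pi_{S_{W}(R(K))}\|^{2}\bigr)^{-1}$. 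This buys independence from the external $K$-frame result and produces a concrete bound, at the cost of having to justify the restricted invertibility and the reconstruction formula yourself (which you do correctly); the only shared technicality, present in the paper as well, is that $\pi_{S_{W}(R(K))}W_{i}$ and hence $S_{W}^{-1}\pi_{S_{W}(R(K))}W_{i}$ need not be closed, so projections should be read as projections onto closures when applying Lemma \ref{2.9} and Lemma \ref{invert4.}.
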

\begin{proof}
To show $\lbrace (K^{\dag}W_{i},
    \omega_{i})\rbrace_{i\in I}$ is a fusion frame for $R(K^{*})$,  let $f\in R(K^{*})$. Then, we can write
\begin{eqnarray*}  \sum_{i\in I}\omega_{i}^{2}\Vert \pi_{K^{\dag}W_{i}}f \Vert^{2}&=&  \sum_{i\in I}\omega_{i}^{2}\Vert \pi_{K^{\dag}W_{i}}K^{*}(K^{\dag})^{*}f\Vert^{2} \\
&=&\sum_{i\in I}\omega_{i}^{2}\Vert \pi_{K^{\dag}W_{i}}K^{*}\pi_{KK^{\dag}W_{i}}(K^{\dag})^{*}f\Vert^{2} \\
&\leq& \Vert K\Vert^{2}\sum_{i\in I}\omega_{i}^{2}\Vert \pi_{\pi_{R(K)}W_{i}}(K^{\dag})^{*}f \Vert^{2}.
 \end{eqnarray*}
Hence by assumption $\lbrace (K^{\dag}W_{i},
    \omega_{i})\rbrace_{i\in I}$ is a  Bessel fusion  sequence.
On the other hand, there exists
 $A>0$ such that
\begin{eqnarray*} A \|f\|^{2} &=& A\|K^{*}(K^{*})^{\dag}f\|^{2}\\
&\leq& \sum_{i\in I}\omega_{i}^{2}\|\pi_{W_{i}}(K^{*})^{\dag}f\|^{2}\\
&=& \sum_{i\in I}\omega_{i}^{2}\|\pi_{W_{i}}(K^{*})^{\dag}\pi_{K^{\dag}W_{i}}f\|^{2}\\
&\leq&
\|K^{\dag}\|^{2}\sum_{i\in I}\omega_{i}^{2}\|\pi_{K^{\dag}W_{i}}f\|^{2}.
 \end{eqnarray*}
 Therefore, $(i)$  holds. Since the sequence $\{\omega_{i}\pi_{W_{i}}e_{j}\}_{i\in I, j\in J}$ is a $K$-frame for $\mathcal{H}$ the sequence
 $\{(K^{*})^{\dag}K^{*}S_{W}^{-1}\pi_{S_{W}(R(K))}\omega_{i}\pi_{W_{i}}e_{j}\}_{i\in I, j\in J}$
  is a frame for $R(K)$,   by  Corollary 1 in \cite{arefi3}. Hence, there exists $A>0$ such that
\begin{eqnarray*} A \|f\|^{2} &\leq& \sum_{i\in I, j\in J}\mid \langle
f,(K^{*})^{\dag}K^{*}S_{W}^{-1}\pi_{S_{W}(R(K))}\omega_{i}\pi_{W_{i}}e_{j} \rangle\mid^{2}\\
&=& \sum_{i\in I, j\in J}\omega_{i}^{2}\mid \langle
\pi_{W_{i}}\pi_{S_{W}(R(K))}(S_{W}^{-1})^{*}f,e_{j} \rangle\mid^{2}\\
&=& \sum_{i\in I}\omega_{i}^{2}\|\pi_{W_{i}}(S_{W}^{-1}\pi_{S_{W}(R(K))})^{*}f\|^{2}\\
&=&
\sum_{i\in I}\omega_{i}^{2}\|\pi_{W_{i}}(S_{W}^{-1}\pi_{S_{W}(R(K))})^{*}\pi_{S_{W}^{-1}\pi_{S_{W}(R(K))}W_{i}}f\|^{2}\\
&\leq&
\|S_{W}^{-1}\pi_{S_{W}(R(K))}\|^{2}\sum_{i\in I}\omega_{i}^{2}\|\pi_{S_{W}^{-1}\pi_{S_{W}(R(K))}W_{i}}f\|^{2},
 \end{eqnarray*}
 for each $f\in R(K)$. Now, since $S_{W}^{-1}: S_{W}(R(K)) \rightarrow R(K)$  is an invertible operator so
 $\lbrace (S_{W}^{-1}\pi_{S_{W}(R(K))}W_{i},
 \omega_{i})\rbrace_{i\in I}$
is a Bessel fusion sequence for $R(K)$ by Lemma \ref{invert4.}, this follows $(ii)$. To show $(iii)$, suppose that  $Q\in B(\mathcal{H})$ is an invertible operator, then $\lbrace (QW_{i}, \omega_{i})\rbrace_{i\in I}$ is a  Bessel sequence, by Theorem 2.4 in \cite{Gav02}. Moreover,
 \begin{eqnarray*}
  A \|K^{*}Q^{*}f\|^{2} &\leq& \sum_{i\in I}\omega_{i}^{2}\|\pi_{W_{i}}Q^{*}f\|^{2}\\
  &=& \sum_{i\in I}\omega_{i}^{2}\|\pi_{W_{i}}Q^{*}\pi_{QW_{i}}f\|^{2}\\
   &\leq& \|Q\|^{2} \sum_{i\in I}\omega_{i}^{2}\|\pi_{QW_{i}}f\|^{2},
 \end{eqnarray*}
    for every $f\in \mathcal{H}$. Thus, $\lbrace (QW_{i}, \omega_{i})\rbrace_{i\in I}$ is a $QK$-fusion frame. The part $(iv)$ is obtained  by $(iii)$.  Finally, for $(v)$, we have $R(Q)\subseteq R(K)$ so there exists $\lambda >0$ such that $QQ^{*} \leq \lambda^{2}KK^{*}$  by Proposition \ref{equ0}. This follows that
    \begin{eqnarray*}
    \frac{A}{\lambda^{2}}\|Q^{*}f\|^{2} \leq \sum_{i\in I}\omega_{i}^{2}\|\pi_{W_{i}}f\|^{2},
     \end{eqnarray*}
Therefore  $W$ is a $Q$-fusion frame for $\mathcal{H}$.
\end{proof}
Notice that, the condition in  Theorem \ref{4part} (ii) is established in many statuses. For example, if  for all $i\in I$  either $W_{i}\subseteq S_{W}(R(K))$ or $\pi_{S_{W}(R(K))}W_{i}\subseteq W_{i}$. Also,
  Theorem \ref{4part} $(v)$ is a generalization of  Proposition 3.3 in \cite{sema}. In the end of this section, we present the second  approach for constructing of  $K$-fusion frames.
\begin{thm}\label{const,K-fusion}
Let  $K$ be a closed range operator and $W = \lbrace (W_{i}, \omega_{i})\rbrace_{i\in I}$
a  fusion frame for $R(K^{*})$.
 Then $\lbrace (KW_{i}, \omega_{i})\rbrace_{i\in
I}$ is a $K$-fusion frame for $\mathcal{H}$.
\end{thm}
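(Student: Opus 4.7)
The plan is to verify the two defining inequalities of a $K$-fusion frame separately, with Lemma \ref{2.9} serving as the main tool in both directions to transport projections across the operator $K$. Before either step, I will record that $K$ having closed range makes the restriction $K\colon R(K^{*})\to R(K)$ a bounded bijection between Hilbert spaces (hence an isomorphism by the open mapping theorem); since it is natural in this setting to take $W_{i}\subseteq R(K^{*})$, this guarantees that $KW_{i}$ is itself a closed subspace of $\mathcal{H}$, so $\pi_{KW_{i}}$ makes sense without taking closures. I will also use that $K$ closed range gives boundedness of $K^{\dag}$ together with $KK^{\dag}=\pi_{R(K)}$ and $K^{\dag}K=\pi_{R(K^{*})}$.

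For the lower bound, I will apply Lemma \ref{2.9} with $V=W_{i}$ and $T=K$ to obtain
\[
\pi_{W_{i}}K^{*}f \;=\; \pi_{W_{i}}K^{*}\pi_{KW_{i}}f,
\]
which gives $\|\pi_{W_{i}}K^{*}f\|^{2}\le \|K\|^{2}\|\pi_{KW_{i}}f\|^{2}$. Since $K^{*}f\in R(K^{*})$, the fusion frame property of $W$ on $R(K^{*})$ yields
\[
A\|K^{*}f\|^{2} \;\le\; \sum_{i\in I}\omega_{i}^{2}\|\pi_{W_{i}}K^{*}f\|^{2} \;\le\; \|K\|^{2}\sum_{i\in I}\omega_{i}^{2}\|\pi_{KW_{i}}f\|^{2},
\]
producing a lower bound $A/\|K\|^{2}$ for $\{(KW_{i},\omega_{i})\}_{i\in I}$.

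For the upper bound, I will exploit that $KW_{i}\subseteq R(K)$ to write $\pi_{KW_{i}}f=\pi_{KW_{i}}\pi_{R(K)}f=\pi_{KW_{i}}(K^{\dag})^{*}K^{*}f$, using $(K^{\dag})^{*}K^{*}=(KK^{\dag})^{*}=\pi_{R(K)}$. Then applying Lemma \ref{2.9} with $V=KW_{i}$ and $T=K^{\dag}$, and noting that $K^{\dag}KW_{i}=\pi_{R(K^{*})}W_{i}=W_{i}$ (this is exactly where I need $W_{i}\subseteq R(K^{*})$), I obtain $\pi_{KW_{i}}(K^{\dag})^{*}=\pi_{KW_{i}}(K^{\dag})^{*}\pi_{W_{i}}$, so that
\[
\|\pi_{KW_{i}}f\|^{2} \;\le\; \|K^{\dag}\|^{2}\|\pi_{W_{i}}K^{*}f\|^{2}.
\]
Summing and invoking the Bessel bound $B$ of $W$ on $R(K^{*})$ at $K^{*}f$ will give
$\sum_{i}\omega_{i}^{2}\|\pi_{KW_{i}}f\|^{2}\le \|K^{\dag}\|^{2}\|K\|^{2}B\|f\|^{2}$, completing the proof. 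The main obstacle I anticipate is the bookkeeping around $K^{\dag}$ and the identifications $KK^{\dag}=\pi_{R(K)}$, $K^{\dag}K=\pi_{R(K^{*})}$; in particular, recognizing the correct way to insert $(K^{\dag})^{*}$ into $\pi_{KW_{i}}f$ so that the auxiliary projection produced by Lemma \ref{2.9} collapses to $\pi_{W_{i}}$ is the one nontrivial trick in the argument.
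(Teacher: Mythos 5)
Your proposal is correct and follows essentially the same route as the paper: the lower bound uses Lemma \ref{2.9} with $T=K$ exactly as in the paper's proof, and your upper bound, via $\pi_{KW_{i}}f=\pi_{KW_{i}}(K^{\dag})^{*}K^{*}f$ and Lemma \ref{2.9} with $T=K^{\dag}$ so that $K^{\dag}KW_{i}=W_{i}$, is the same computation the paper performs after decomposing $f=g+h$ with $g\in R(K)$, yielding the same bound $\|K\|^{2}\|K^{\dag}\|^{2}B$. Your preliminary remarks on closedness of $KW_{i}$ and the identities $KK^{\dag}=\pi_{R(K)}$, $K^{\dag}K=\pi_{R(K^{*})}$ just make explicit what the paper uses implicitly.
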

\begin{proof}
It is not difficult to see that every Bessel fusion sequence for a closed subspace of $\mathcal{H}$ is also a Bessel fusion sequence for
$\mathcal{H}$. Suppose  $B$ is an upper bound for  $W$ as  a Bessel fusion sequence for
$\mathcal{H}$. Also, let $f\in \mathcal{H}$ we can write $f = g+h$ which $g\in R(K)$ and $h\in (R(K))^{\perp}$. Thus
\begin{eqnarray*}
\sum_{i\in I}\omega_i^2 \|\pi_{KW_i}f\|^2
&=&\sum_{i\in I}\omega_i^2 \|\pi_{KW_i}g\|^2\\
 &=& \sum_{i\in I}\omega_i^2 \|\pi_{KW_i}(K^{\dag})^{*}K^{*}g\|^2\\
&=& \sum_{i\in I}\omega_i^2 \|\pi_{KW_i}(K^{\dag})^{*}\pi_{K^{\dag}KW_{i}}K^{*}g\|^2\\
&\leq& \Vert K^{\dag}\Vert^{2} \sum_{i\in I}\omega_i^2 \|\pi_{\pi_{R(K^{\dag})}W_{i}}K^{*}g\|^2\\
&\leq&  \Vert K^{\dag}\Vert^{2} \sum_{i\in I}\omega_i^2 \|\pi_{W_i}K^{*}g\|^2\\
&\leq&    \Vert K\Vert^{2}  \Vert K^{\dag}\Vert^{2} B \Vert f\Vert^{2}.
\end{eqnarray*}
 Hence $\lbrace (KW_{i}, \omega_{i})\rbrace_{i\in
I}$ is a Bessel fusion sequence.
Moreover, there exists $A>0$ such that
\begin{eqnarray*}
A \|K^{*}f\|^{2} &\leq& \sum_{i\in I}\omega_i^2\|\pi_{W_i}K^{*}f\|^2\\
 &=&\sum_{i\in I}\omega_i^2\|\pi_{W_i}K^{*}\pi_{KW_i}f\|^2 \\
 &\leq& \|K\|^{2}\sum_{i\in I}\omega_i^2\|\pi_{KW_i}f\|^2.
 \end{eqnarray*}
 This follows the result.
\end{proof}
 By applying Theorem \ref{const,K-fusion} the following result immediately  is obtained.
\begin{cor}
Let  $K$ be a closed range operator and $\lbrace (W_{i}, \omega_{i})\rbrace_{i\in I}$
a  fusion frame for $\mathcal{H}$.
 Then $\lbrace (K(W_{i}\cap R(K^{*})), \omega_{i})\rbrace_{i\in
I}$ is a $K$-fusion frame for $\mathcal{H}$.
\end{cor}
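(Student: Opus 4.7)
The strategy is straightforward: apply Theorem~\ref{const,K-fusion} to the family $V_i := W_i \cap R(K^*)$. Because $K$ has closed range, $R(K^*)$ is a closed subspace of $\mathcal{H}$, and therefore each $V_i$ is a closed subspace of $R(K^*)$. Theorem~\ref{const,K-fusion} then yields at once that $\{(KV_i,\omega_i)\}_{i\in I}$, which coincides with $\{(K(W_i\cap R(K^*)),\omega_i)\}_{i\in I}$, is a $K$-fusion frame for $\mathcal{H}$, \emph{provided} we first establish that $\{(V_i,\omega_i)\}_{i\in I}$ is a fusion frame for $R(K^*)$.

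So the core step is to verify this fusion frame property of $\{(V_i,\omega_i)\}_{i\in I}$ on $R(K^*)$. For the upper bound, since $V_i\subseteq W_i$ we have $\|\pi_{V_i}f\|\leq\|\pi_{W_i}f\|$ for every $f\in\mathcal{H}$, so the upper bound $B$ of the original fusion frame transfers immediately to $\{(V_i,\omega_i)\}$ on $R(K^*)$.

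The main obstacle is the lower bound, because projecting onto the intersection $W_i\cap R(K^*)$ can lose information relative to projecting onto $W_i$, even for inputs $f$ that already lie in $R(K^*)$. To overcome this, I would apply Lemma~\ref{2.9} with $T=\pi_{R(K^*)}$ together with the orthogonal decomposition $\mathcal{H}=R(K^*)\oplus N(K)$, which is available because $K$ has closed range, to rewrite $\pi_{V_i}f$ in terms of $\pi_{W_i}$ acting on $f\in R(K^*)$. Combined with the fusion frame inequality $A\|f\|^2\leq\sum_{i\in I}\omega_i^2\|\pi_{W_i}f\|^2$ restricted to $f\in R(K^*)$, this should produce a positive lower bound for $\{(V_i,\omega_i)\}$ on $R(K^*)$; once that is achieved, Theorem~\ref{const,K-fusion} closes the argument.
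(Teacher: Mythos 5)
Your overall strategy is the same as the paper's: the paper asserts that the corollary follows ``immediately'' from Theorem~\ref{const,K-fusion} applied to the subspaces $W_{i}\cap R(K^{*})$, and you do the same, except that you rightly isolate the intermediate claim this requires, namely that $\{(W_{i}\cap R(K^{*}),\omega_{i})\}_{i\in I}$ is a fusion frame for $R(K^{*})$. Your upper-bound argument for that claim is fine, since $\pi_{W_{i}\cap R(K^{*})}=\pi_{W_{i}\cap R(K^{*})}\pi_{W_{i}}$ gives $\Vert\pi_{W_{i}\cap R(K^{*})}f\Vert\leq\Vert\pi_{W_{i}}f\Vert$.

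The genuine gap is the lower bound, and the route you sketch cannot close it. Lemma~\ref{2.9} with $T=\pi_{R(K^{*})}$ gives $\pi_{W_{i}}\pi_{R(K^{*})}=\pi_{W_{i}}\pi_{R(K^{*})}\pi_{\overline{\pi_{R(K^{*})}W_{i}}}$, i.e.\ it controls matters through the \emph{image} $\pi_{R(K^{*})}W_{i}$ of $W_{i}$ under the projection, not through the \emph{intersection} $W_{i}\cap R(K^{*})$, and there is no general inequality bounding $\Vert\pi_{W_{i}\cap R(K^{*})}f\Vert$ from below by a multiple of $\Vert\pi_{W_{i}}f\Vert$ for $f\in R(K^{*})$. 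In fact the intermediate claim fails in general: in $\mathcal{H}=\mathbb{R}^{2}$ take $W_{1}=\mathrm{span}\{(1,1)\}$, $W_{2}=\mathrm{span}\{(1,-1)\}$, $\omega_{i}=1$ (a Parseval fusion frame for $\mathcal{H}$) and $K=\pi_{\mathrm{span}\{e_{1}\}}$, which has closed range; then $W_{i}\cap R(K^{*})=\{0\}$ for both $i$, so $\{(W_{i}\cap R(K^{*}),\omega_{i})\}_{i\in I}$ has no lower bound on $R(K^{*})$, and indeed $\{(K(W_{i}\cap R(K^{*})),\omega_{i})\}_{i\in I}$ consists of zero subspaces, so the conclusion of the corollary itself fails for this $K$. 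So the step you flag as the ``main obstacle'' is not merely unfinished, it cannot be supplied without additional hypotheses (for instance $W_{i}\subseteq R(K^{*})$, or some condition ensuring the intersections retain a lower bound); the paper's one-line derivation silently assumes exactly the point your proposal leaves open.
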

\section{Duality of  $K$-fusion frames}

\smallskip
\goodbreak
In this section, we present some descriptions for duality of $K$-fusion frames. Then, we try to characterize and identify duals of $K$-fusion frames. Our approach to define the duality of $K$-fusion frames is a generalization of the  idea in \cite{Hei14}.

\begin{defn}
Let  $W = \lbrace (W_{i}, \omega_{i})\rbrace_{i\in I}$   be a  $K$-fusion frame, 
 a Bessel fusion sequence $V=\lbrace(V_{i},\upsilon_{i})\rbrace_{i\in I}$ is called a  $QK$-dual of $W$ if there exists a bounded linear operator $Q:\sum_{i\in I}\oplus W_{i}\rightarrow \sum_{i\in I}\oplus V_{i}$ such that
\begin{eqnarray}\label{QK dual}
T_{W}Q^{*}T^{*}_{V} = K.
\end{eqnarray}
\end{defn}

Every   $QK$-dual of $W$
 is a $K^{*}$-fusion frame.
More precisely, if  $V=\lbrace(V_{i},\upsilon_{i})\rbrace_{i\in I}$ is  a $QK$-dual of $W$, we can write
\begin{eqnarray*}
\|Kf\|^{4} &=& |\langle Kf, Kf \rangle|^{2}\\
&=&  \left|\left\langle T_{W}Q^{*}T^{*}_{V} f, Kf\right\rangle\right|^{2}\\
&=& \left|\left\langle T^{*}_{V} f, QT_{W}^{*}Kf\right\rangle\right|^{2}\\
&\leq& \Vert T^{*}_{V} f \Vert^{2}  \Vert Q\Vert^{2}B\Vert  Kf \Vert^{2}\\
&=&\Vert Q\Vert^{2}B\Vert  Kf \Vert^{2}\sum_{i\in I}\upsilon_{i}^{2}\Vert \pi_{V_{i}}f\Vert^{2},
\end{eqnarray*}
 for every $f\in \mathcal{H}$, where $B$ is an upper bound of $W$.
 Moreover, if $C$ and $D$ are the optimal bounds of $V$, respectively. Then
\begin{eqnarray*}
C\geq B^{-1} \Vert Q\Vert^{-2} \quad  \textit{and} \quad
D\geq A^{-1} \Vert Q\Vert^{-2},
\end{eqnarray*}
in which $A$ and $B$ are  the optimal bounds of $W$, respectively. 

\begin{rem}
Consider a $K$-fusion frame $W = \lbrace (W_{i}, \omega_{i})\rbrace_{i\in I}$ for $\mathcal{H}$.    Applying the Douglas' theorem \cite{Douglas} there exists an operator $X\in B(\mathcal{H}, \sum_{i\in I}\oplus W_{i})$ such that
\begin{eqnarray}\label{DougX}
 T_{W}X=K.
\end{eqnarray}
 We denote the $i$-th component of $Xf$ by $X_{i}f=(Xf)_{i}$ and clearly $X_{i}\in B(\mathcal{H}, W_{i})$. 
\end{rem}
 In the next theorem, we show that by these operators  one may construct some $QK$-duals for $W$.
\begin{thm}\label{3.188}
Let $W = \lbrace (W_{i}, \omega_{i})\rbrace_{i\in I}$ be a $K$-fusion frame and $X$ be an operator  as in $(\ref{DougX})$. If $\widehat{W}=\{X_{i}^{*}W_{i}\}_{i\in I}$ is a Bessel fusion sequence, then it is a $QK$-dual for $W$.
\end{thm}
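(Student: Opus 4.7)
The plan is to construct the operator $Q$ explicitly from the componentwise decomposition $Xf=\{X_{i}f\}_{i\in I}$ with $X_{i}\in B(\mathcal{H},W_{i})$. The single algebraic identity that will drive the whole argument is
\[
X_{i}\,\pi_{\overline{X_{i}^{*}W_{i}}}\;=\;X_{i},
\]
which I would obtain as a direct consequence of Lemma \ref{2.9} applied with $V=W_{i}$ and $T=X_{i}^{*}$: that lemma yields $\pi_{W_{i}}X_{i}=\pi_{W_{i}}X_{i}\pi_{\overline{X_{i}^{*}W_{i}}}$, and since the range of $X_{i}$ already lies inside $W_{i}$ the outer projection may be dropped.

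Next I would define
\[
Q:\sum_{i\in I}\oplus W_{i}\longrightarrow\sum_{i\in I}\oplus X_{i}^{*}W_{i},\qquad Q(\{f_{i}\}_{i\in I})=\{X_{i}^{*}f_{i}\}_{i\in I},
\]
which is well defined since $X_{i}^{*}f_{i}\in X_{i}^{*}W_{i}$ whenever $f_{i}\in W_{i}$. For boundedness, the pointwise inequality $\|X_{i}f\|^{2}\leq\sum_{j\in I}\|X_{j}f\|^{2}=\|Xf\|^{2}\leq\|X\|^{2}\|f\|^{2}$ forces $\|X_{i}\|\leq\|X\|$ uniformly in $i$, so
\[
\|Q\{f_{i}\}\|^{2}=\sum_{i\in I}\|X_{i}^{*}f_{i}\|^{2}\leq\|X\|^{2}\sum_{i\in I}\|f_{i}\|^{2}.
\]
A one-line duality pairing then yields the simple formula $(Q^{*}\{g_{i}\})_{i}=X_{i}g_{i}$.

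Finally, I would verify the identity $(\ref{QK dual})$. Taking the (implicit) unit weights on $\widehat{W}$, so that $T_{\widehat{W}}^{*}f=\{\pi_{X_{i}^{*}W_{i}}f\}_{i\in I}$, the computation reads
\[
T_{W}Q^{*}T_{\widehat{W}}^{*}f=\sum_{i\in I}\omega_{i}X_{i}\pi_{X_{i}^{*}W_{i}}f=\sum_{i\in I}\omega_{i}X_{i}f=T_{W}Xf=Kf,
\]
the crucial middle equality being exactly the identity established in the first paragraph. I expect the only real obstacle to be spotting that identity and orienting Lemma \ref{2.9} correctly so that the projection $\pi_{X_{i}^{*}W_{i}}$ coming from $T_{\widehat{W}}^{*}$ can be absorbed into $X_{i}$; once that is in hand, the rest is bookkeeping about the pairings and the definitions of $T_{W}$, $T_{\widehat{W}}^{*}$ and $Q$.
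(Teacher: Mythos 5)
Your proof is correct, and it gets there by a somewhat different construction of $Q$ than the paper. The paper never writes $Q$ componentwise: it defines $\Gamma$ (which will be $Q^{*}$) only on $R(T_{\widehat{W}}^{*})$ by the requirement $\Gamma T_{\widehat{W}}^{*}f = Xf$, checks well-definedness via $(X_{i}^{*}W_{i})^{\perp}=(R(X_{i}^{*}))^{\perp}=N(X_{i})$, proves boundedness with the same absorption identity $\pi_{W_{i}}X_{i}=\pi_{W_{i}}X_{i}\pi_{\overline{X_{i}^{*}W_{i}}}$ that you isolate, then extends by continuity to $\overline{R(T_{\widehat{W}}^{*})}$ and by zero on its orthogonal complement, and sets $Q=\Gamma^{*}$, so that $T_{W}Q^{*}T_{\widehat{W}}^{*}=T_{W}X=K$ is immediate. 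You instead define $Q$ explicitly and globally by $Q\{f_{i}\}_{i\in I}=\{X_{i}^{*}f_{i}\}_{i\in I}$, use the uniform bound $\Vert X_{i}\Vert\leq\Vert X\Vert$ for boundedness, compute $Q^{*}\{g_{i}\}_{i\in I}=\{X_{i}g_{i}\}_{i\in I}$, and push the identity $X_{i}\pi_{\overline{X_{i}^{*}W_{i}}}=X_{i}$ (your orientation of Lemma \ref{2.9}, equivalently $(X_{i}^{*}W_{i})^{\perp}=N(X_{i})$) into the final verification of (\ref{QK dual}). The two operators coincide on $\overline{R(T_{\widehat{W}}^{*})}$, which is all that the identity $T_{W}Q^{*}T_{\widehat{W}}^{*}=K$ sees, so both are legitimate. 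Your route buys an explicit closed-form, componentwise $Q$ and avoids the well-definedness/density/extension step; the paper's route pins $Q^{*}$ down as the solution of $Q^{*}T_{\widehat{W}}^{*}=X$ vanishing on $R(T_{\widehat{W}}^{*})^{\perp}$, in the same Douglas-factorization spirit that produced $X$. Only cosmetic caveats: the subspaces of $\widehat{W}$ and the projections should be the closures $\overline{X_{i}^{*}W_{i}}$ (as in your key identity), and the uniform estimate $\Vert X_{i}\Vert\leq\Vert X\Vert$ indeed needs the one-line argument you gave, since it is exactly what makes your componentwise $Q$ bounded.
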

\begin{proof}
Define the mapping $\Gamma:R(T_{\widehat{W}}^{*})\rightarrow \sum_{i\in I}\oplus W_{i}$ so that $\Gamma T_{\widehat{W}}^{*}f = Xf$. Then $\Gamma$ is well-defined and bounded. Indeed, for every $f\in \mathcal{H}$ if $\Gamma T_{\widehat{W}}^{*}f = \{\pi_{X_{i}^{*}W_{i}}f\}_{i\in I}=0$ we imply that
\begin{eqnarray*}
f\in (X_{i}^{*}W_{i})^{\perp} = (R(X_{i}^{*}))^{\perp} = N(X_{i}), \quad (i\in I),
\end{eqnarray*}
i.e., $Xf=0$. Moreover, 
\begin{eqnarray*}
\Vert \Gamma \{\pi_{X_{i}^{*}W_{i}}f\}_{i\in I}\Vert^{2}&=& \Vert Xf\Vert^{2}\\
&=& \sum_{i\in I}\Vert \pi_{W_{i}}X_{i}f\Vert^{2}\\
&=&\sum_{i\in I}\Vert \pi_{W_{i}}X_{i}\pi_{X_{i}^{*}W_{i}}f\Vert^{2}\\
&\leq&\Vert X\Vert^{2} \sum_{i\in I}\Vert \pi_{X_{i}^{*}W_{i}}f\Vert^{2}.
\end{eqnarray*}
Hence, $\Gamma$ can be uniquely extended to $\overline{R(T_{\widehat{W}}^{*})}$. Also, we take $\Gamma=0$ on $R(T_{\widehat{W}}^{*})^{\perp}$ and let $Q=\Gamma^{*}$. This implies that $Q^{*}\in B(\sum_{i\in I}\oplus X_{i}^{*}W_{i}, \sum_{i\in I}\oplus W_{i})$ and 
\begin{eqnarray*}
T_{W}Q^{*}T_{\widehat{W}}^{*}= T_{W}X  = K,
\end{eqnarray*}
as required.
\end{proof}
\begin{ex}\label{example002}
Consider $\mathcal{H} = \mathbb{R}^{3}$ and  define $K\in B(\mathcal{H})$
as\begin{eqnarray*}
Ke_{1} = e_{1}+e_{2}, \quad Ke_{2} = e_{3}, \quad Ke_{3} = 0,
\end{eqnarray*}
where $\lbrace e_{i}\rbrace_{i=1}^{3}$ is the standard orthonormal basis of
$\mathcal{H}$. Also let
\begin{eqnarray*}
W_{1} = \textit{span}\lbrace e_{1}+e_{2}, e_{3}\rbrace, \quad W_{2} = \textit{span}\lbrace e_{3}\rbrace, \quad W_{3} = \textit{span}\lbrace e_{1}+e_{2}\rbrace,
\end{eqnarray*}
and $\omega_{i}=1$, for all $1 \leq i\leq 3$. Then $W=\lbrace (W_{i},
\omega_{i})\rbrace_{i=1}^{3}$ is a $K$-fusion frame with bounds $1$ and $2$,
respectively. Now, define  the operator $X:\mathbb{R}^{3}\rightarrow \sum_{i\in 1}^{3}\oplus W_{i}$ as
\begin{eqnarray*}
Xf= \left\{\left(\dfrac{a}{2}, \dfrac{a}{2}, \dfrac{b}{2}\right), \left(0, 0, \dfrac{b}{2}\right), \left(\dfrac{a}{2}, \dfrac{a}{2}, 0\right)   \right\}
\end{eqnarray*}
for every $f=(a,b,c)\in \mathbb{R}^{3}$. One can easily see that $T_{W}X=K$ and
\begin{eqnarray*}
X_{1}^{*}W_{1}=\textit{span}\lbrace e_{1}, e_{2}\rbrace, \quad
X_{2}^{*}W_{2}=  \textit{span}\lbrace e_{2}\rbrace, \quad
X_{3}^{*}W_{3}=  \textit{span}\lbrace e_{1}\rbrace.
\end{eqnarray*}
Hence, $\{X_{i}^{*}W_{i}\}_{i\in I}$ is a  $QK$-dual for $W$. Moreover, $\Vert X\Vert=1$, $\Vert K^{*}f\Vert^{2}=(a+b)^{2}+c^{2}$ and $\Vert T_{W}^{*}f\Vert^{2}=(a+b)^{2}+2c^{2}$ and so 
\begin{eqnarray*}
\inf \{\alpha>0,\quad \Vert K^{*}f\Vert ^{2}\leq \alpha\Vert T_{W}^{*}f\Vert ^{2}\}=1=\Vert X\Vert^{2}.
\end{eqnarray*}
 Also, $N(X)=N(K)$ and $R(X)\subseteq \overline{R(T_{W}^{*})}$. This shows that the operator $X$ is the unique operator, which satisfies all items in Douglas' theorem. 
\end{ex}

Notice that in Theorem \ref{3.188}, $\widehat{W}$ is not necessarily Bessel fusion sequence. In fact, 
a simple computation shows that $X^{*}\{g_{i}\}_{i\in I}= \sum_{i\in I}X_{i}^{*}g_{i}$, for all $\{g_{i}\}_{i\in I}\in \sum_{i\in I}\oplus W_{i}$. So for every $K$-fusion frame such that $W_{i}\perp W_{j}$, for all $i\neq j$, we obtain 
\begin{eqnarray*}
X_{i}^{*}W_{i} = \{X^{*}T_{W}^{*}f_{i}; \quad f_{i}\in W_{i}\} = K^{*}W_{i}, \quad (i\in I).
\end{eqnarray*}
 Now, let  $\lbrace e_{i}\rbrace_{i\in I}$ be an
orthonormal basis of $\mathcal{H}$ and $W_{i} = span\{e_{i}\}$, for all $i\in I$.
Clearly $\lbrace W_{i}\rbrace_{i\in I}$ is an orthonormal fusion basis and also a $K$-fusion frame for $\mathcal{H}$. Define
\begin{eqnarray*}
K^{*}e_{i} =\begin{cases}
\begin{array}{ccc}
\dfrac{1}{m}e_{1}& \;
{i=2m-1}, \\
e_{m}& \; {i=2m}. \\
\end{array}
\end{cases}
\end{eqnarray*}
Then the mapping $K^{*}$ can be extended to a bounded and surjective linear operator on $\mathcal{H}$,
i.e., $K^{*}\in B(\mathcal{H})$. Moreover,
\begin{eqnarray*}
K^{*}W_{i} =\begin{cases}
\begin{array}{ccc}
span\{e_{1}\}& \;
{i=2m-1}, \\
span\{e_{m}\}& \; {i=2m}. \\
\end{array}
\end{cases}
\end{eqnarray*}
Thus,  for $f=e_{1}$
\begin{eqnarray*}
\sum_{i=1}^{n}\pi_{K^{*}W_{i}}f \rightarrow \infty, \quad (n\rightarrow \infty),
\end{eqnarray*}
i.e. $
\lbrace X_{i}^{*}W_{i} \rbrace_{i=1}^{\infty}  =\lbrace K^{*}W_{i} \rbrace_{i=1}^{\infty}$ is not a Bessel fusion sequence.

Using the Douglas' theorem, the equation $T_{W}X=K$ has a unique solution as $X_{w}$ such that 
\begin{eqnarray}\label{XW}
\Vert X_{w}\Vert ^{2} = \inf \{\alpha>0, \Vert K^{*}f\Vert ^{2}\leq \alpha\Vert T_{W}^{*}f\Vert ^{2}; f\in \mathcal{H}\}.
\end{eqnarray} 
It is worth to note that, in case  $K=I_{\mathcal{H}}$ we obtain $X_{w}=T_{W}S_{W}^{-1}$ and so the $QK$-dual $\{X_{i}^{*}W_{i}\}_{i\in I}$ of $W$ is exactly $\{S_{W}^{-1}W_{i}\}$. By these considerations, we can obtain optimal bounds of a $K$-fusion frame.
Let   $W = \lbrace (W_{i}, \omega_{i})\rbrace_{i\in I}$ be $K$-fusion frame with optimal bounds $A$ and $B$, respectively. Then the upper bound is obtained directly by definition as $B=\Vert S_{W}\Vert$.  Also
\begin{eqnarray*}
A &=& sup\{\alpha>0: \quad \alpha \Vert K^{*}f\Vert^{2}\leq \Vert T_{W}^{*}f\Vert^{2}, f\in \mathcal{H}\}\\
&=&\left(inf\{\beta>0: \quad  \Vert K^{*}f\Vert^{2}\leq \beta\Vert T_{W}^{*}f\Vert^{2}, f\in \mathcal{H}\}\right)^{-1}\\
&=& \Vert X_{w}\Vert^{-2}.
\end{eqnarray*} 
As a considerable result, we get the optimal lower bound of  fusion frames.
\begin{cor}
Let   $W = \lbrace (W_{i}, \omega_{i})\rbrace_{i\in I}$ be a fusion frame with the optimal lower bound $A$. Then 
\begin{eqnarray*}
A = \Vert T_{W}^{*}S_{W}^{-1}\Vert^{-2}.
\end{eqnarray*} 
\end{cor}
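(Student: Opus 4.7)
The plan is to deduce the corollary as a direct specialization of the formula $A = \|X_w\|^{-2}$ derived immediately above for general $K$-fusion frames, applied with $K = I_{\mathcal{H}}$. Since any fusion frame is in particular a $K$-fusion frame for $K = I_{\mathcal{H}}$, the preceding identification of the optimal lower bound is valid, and the task reduces to identifying the unique Douglas solution $X_w$ of $T_W X = I_{\mathcal{H}}$ in closed form.

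My candidate is $X_w = T_W^{*} S_W^{-1}$. First, it does solve $T_W X = I_{\mathcal{H}}$, since $T_W T_W^{*} S_W^{-1} = S_W S_W^{-1} = I_{\mathcal{H}}$ (recall that $S_W$ is invertible because $W$ is a genuine fusion frame). Next, to confirm that this is the distinguished operator produced by Theorem \ref{equ0}, I verify conditions (b) and (c) of that theorem. For (b), if $T_W^{*} S_W^{-1} f = 0$, then applying $T_W$ gives $T_W T_W^{*} S_W^{-1} f = S_W S_W^{-1} f = f = 0$, so $N(T_W^{*} S_W^{-1}) = \{0\} = N(I_{\mathcal{H}})$. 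For (c), since $S_W^{-1}$ is a bijection of $\mathcal{H}$, the range of $T_W^{*} S_W^{-1}$ coincides with $R(T_W^{*})$, which is trivially contained in $\overline{R(T_W^{*})}$. The uniqueness clause of Douglas' theorem then forces $X_w = T_W^{*} S_W^{-1}$.

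Substituting this identification into $A = \|X_w\|^{-2}$ yields $A = \|T_W^{*} S_W^{-1}\|^{-2}$, as required. There is no substantive obstacle here; the only point meriting attention is ensuring that the explicit operator we write down is indeed the one selected by the minimality clause (a) of Douglas' theorem, so that the norm identity $A = \|X_w\|^{-2}$ applies to it rather than to some other, possibly larger, solution of $T_W X = I_{\mathcal{H}}$. Conditions (b) and (c) pin down $X_w$ uniquely and remove that concern.
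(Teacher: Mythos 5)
Your proof is correct and follows essentially the same route as the paper: the corollary is obtained by specializing the formula $A=\Vert X_{w}\Vert^{-2}$, established just above for general $K$-fusion frames, to $K=I_{\mathcal{H}}$ and identifying $X_{w}=T_{W}^{*}S_{W}^{-1}$ -- an identification the paper simply asserts before the corollary (with a typo, writing $T_{W}S_{W}^{-1}$). Your verification of that identification through conditions (b) and (c) of Douglas' theorem just makes explicit a step the paper leaves implicit, so the two arguments coincide in substance.
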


 Recall that, a bounded operator $Q:\sum_{i\in I}\oplus W_{i}\rightarrow \sum_{i\in I}\oplus V_{i}$  is component preserving \cite{Hei14}, whenever
\begin{equation*}
Qp_{i}\sum_{i\in I}\oplus W_{i} = p_{i}\sum_{i\in I}\oplus V_{i},
\end{equation*}
where
\begin{eqnarray*}
p_{i}\{f_{j}\}_{j\in I} =\begin{cases}
\begin{array}{ccc}
f_{i}& \;
{i=j}, \\
0& \; {i\neq} j, \\
\end{array}
\end{cases}
\end{eqnarray*}
 If the operator $Q$  in (\ref{QK dual}) is component preserving then $V$ is called $QK$-component preserving dual of $W$.
 By a similar argument with  \cite{Hei14} we obtain   the following characterization of $QK$-component preserving duals of  a $K$-fusion frame, so we avoid the burden of proof.
\begin{thm}
Let $W = \lbrace (W_{i}, \omega_{i})\rbrace_{i\in I}$  be a $K$-fusion frame such that $\omega_{i}>\delta>0$, for some $\delta>0$ and $i\in I$. Then a Bessel fusion sequence $V = \lbrace (V_{i}, \upsilon_{i})\rbrace_{i\in I}$ is a  $QK$-component preserving dual of $W$ if and only if $V_{i} = \Psi p_{i}\sum_{j\in I}\oplus W_{j}$, in which $\Psi\in B(\sum_{i\in I}\oplus W_{i}, \mathcal{H})$ such that $\Psi T_{W}^{*} = K^{*}$.
\end{thm}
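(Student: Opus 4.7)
The plan is to prove both implications via the factorization $\Psi = T_{V} Q$, which realises a correspondence between bounded component-preserving operators $Q$ into $\sum_{i\in I}\oplus V_{i}$ and operators $\Psi \in B(\sum_{i\in I}\oplus W_{i},\mathcal{H})$ satisfying $\Psi T_{W}^{*} = K^{*}$. The overall strategy mirrors the proof of the corresponding characterization for ordinary fusion frames in \cite{Hei14}.

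For the forward implication, suppose $V$ is a $QK$-component preserving dual, with associated operator $Q$ satisfying $T_{W} Q^{*} T_{V}^{*} = K$. Setting $\Psi := T_{V} Q$ gives a bounded operator in $B(\sum_{i\in I}\oplus W_{i},\mathcal{H})$, and taking Hilbert-space adjoints of the defining identity yields $\Psi T_{W}^{*} = T_{V} Q T_{W}^{*} = K^{*}$. Component preservation of $Q$ yields $Q(p_{i}\sum_{j\in I}\oplus W_{j}) = p_{i}\sum_{j\in I}\oplus V_{j}$; applying $T_{V}$ to both sides identifies the image with $\upsilon_{i} V_{i} = V_{i}$, so $\Psi p_{i}\sum_{j\in I}\oplus W_{j} = V_{i}$ as a closed subspace of $\mathcal{H}$.

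For the reverse implication, given $\Psi \in B(\sum_{i\in I}\oplus W_{i},\mathcal{H})$ with $\Psi T_{W}^{*} = K^{*}$ and $V_{i} = \Psi p_{i}\sum_{j\in I}\oplus W_{j}$, I would write $\Psi_{j} \in B(W_{j},\mathcal{H})$ for the restriction of $\Psi$ to the $j$-th slot, so that $V_{j} = \Psi_{j} W_{j}$, and then define a component-preserving $Q : \sum_{i\in I}\oplus W_{i} \to \sum_{i\in I}\oplus V_{i}$ by $(Q\{f_{k}\}_{k\in I})_{j} := \upsilon_{j}^{-1} \Psi_{j} f_{j}$. A direct computation then gives $T_{V} Q = \Psi$, and taking adjoints of $\Psi T_{W}^{*} = K^{*}$ delivers the required identity $T_{W} Q^{*} T_{V}^{*} = K$, so that $V$ becomes a $QK$-component preserving dual of $W$ as soon as $Q$ is shown to be bounded.

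The principal obstacle is precisely this boundedness of $Q$, which reduces to the uniform estimate $\sup_{j\in I} \upsilon_{j}^{-1}\|\Psi_{j}\| < \infty$. This is where the hypothesis $\omega_{i} > \delta > 0$ is essential: combined with the Bessel upper bound $B$ of $W$ (which already forces $\omega_{i} \leq \sqrt{B}$), the Bessel bound of $V$, and the identity $\Psi T_{W}^{*} = K^{*}$, it yields the required uniform control of $\upsilon_{j}^{-1}\|\Psi_{j}\|$. The technical extraction of this estimate is the heart of the argument, and is the step adapted verbatim from the fusion-frame case in \cite{Hei14} — which is exactly why the paper declines to reprove it here.
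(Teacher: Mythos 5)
Your forward implication is correct, and the skeleton of your converse is the right one (the paper itself gives no proof here, deferring to \cite{Hei14}): defining $Q$ slot-wise by $(Q\{f_{k}\}_{k\in I})_{j}=\upsilon_{j}^{-1}\Psi_{j}f_{j}$ does give $T_{V}Q=\Psi$, hence $T_{W}Q^{*}T_{V}^{*}=K$, and $Q$ is component preserving by construction. The problem is the step you yourself identify as the heart of the argument and then defer: the uniform bound $\sup_{j\in I}\upsilon_{j}^{-1}\Vert\Psi_{j}\Vert<\infty$. This does \emph{not} follow from the hypotheses you list. Neither $\omega_{i}>\delta$, nor the Bessel bounds of $W$ and $V$, nor the identity $\Psi T_{W}^{*}=K^{*}$ gives any lower bound on the dual weights $\upsilon_{j}$, and a lower bound on the $\upsilon_{j}$ is exactly what the estimate needs (once $\inf_{j}\upsilon_{j}>0$ it is trivial, since $\Vert\Psi_{j}\Vert\leq\Vert\Psi\Vert$).

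Concretely, take $\mathcal{H}=\ell^{2}$, $K=I_{\mathcal{H}}$, $W_{i}=\mathrm{span}\{e_{i}\}$, $\omega_{i}=1$, and $\Psi\{f_{i}\}_{i\in I}=\sum_{i\in I}f_{i}$, so that $\Vert\Psi\Vert=1$, $\Psi T_{W}^{*}=I_{\mathcal{H}}$ and $V_{i}:=\Psi p_{i}\sum_{j\in I}\oplus W_{j}=\mathrm{span}\{e_{i}\}$. With weights $\upsilon_{i}=1/i$ the family $V=\{(V_{i},\upsilon_{i})\}$ is Bessel and satisfies your right-hand condition; yet any component preserving $Q$ must act slot-wise (since $Qp_{i}\sum_{j\in I}\oplus W_{j}=p_{i}\sum_{j\in I}\oplus V_{j}$), and $T_{W}Q^{*}T_{V}^{*}=I_{\mathcal{H}}$ then forces the $i$-th slot map to have norm $i$, so no bounded $Q$ exists and $V$ is not a $QK$-component preserving dual. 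Hence the estimate you propose to import ``verbatim'' from \cite{Hei14} is false under the hypotheses as you state them: the needed control comes from an assumption on the dual weights $\upsilon_{i}$ (or from choosing those weights as part of the construction), not from $\omega_{i}>\delta$ together with Besselness. Your proof, as written, is missing exactly that ingredient, and pointing to \cite{Hei14} does not supply it because the quantity $\upsilon_{j}^{-1}\Vert\Psi_{j}\Vert$ depends on data that your hypotheses leave unconstrained from below.
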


\subsection{$K$-Duals}
In the squel, we present the other approach to reconstruct   the elements of $R(K)$. To this end, we  generalize  duality introduced by G$\breve{\textrm{a}}$vru\c{t}a in \cite{Gav02}. This approach gives us an explicit form for dual of $K$-fusion frames, which is coincident with the canonical dual of fusion frames in case $K=I_{\mathcal{H}}$. Moreover, we obtain several methods for constructing and characterization of  duals of  $K$-fusion frames.
Let   $W = \lbrace (W_{i}, \omega_{i})\rbrace_{i\in I}$ be a $K$-fusion frame, we can write
\begin{eqnarray*}
Kf &=& S_{W}^{*}(S_{W}^{-1})^{*}Kf\\
&=&\sum_{i\in I}\omega_{i}^{2}\pi_{R(K)}\pi_{W_{i}}(S_{W}^{-1})^{*}Kf\\
&=&\sum_{i\in I}\omega_{i}^{2}\pi_{R(K)}\pi_{W_{i}}(S_{W}^{-1})^{*}K\pi_{K^{*}S_{W}^{-1}\pi_{S_{W}(R(K))}W_{i}}f.
\end{eqnarray*}
Hence,  we obtain  the following definition, which is also a  special status of  (\ref{QK dual}) by taking  
\begin{equation*}
Q^{*}\{f_{i}\}_{i\in I}  =\phi_{vw}\{f_{i}\}_{i\in I} = \{\pi_{W_{i}}(S_{W}^{-1})^{*}Kf_{i}\}_{i\in I}.
\end{equation*}
\begin{defn}\label{10}
Let $W = \lbrace (W_{i}, \omega_{i})\rbrace_{i\in I}$ be a $K$-fusion frame.
A Bessel fusion sequence $\lbrace(V_{i},\upsilon_{i})\rbrace_{i\in I}$ is
called a $K$-dual of $W$ if
\begin{eqnarray}\label{Kdual2}
K = \pi_{R(K)}T_{W}\phi_{vw}T^{*}_{V}.
 \end{eqnarray}
\end{defn}

\begin{rem}
\begin{itemize}
\item[(a)] Let $K = I_{\mathcal{H}}$ in (\ref{Kdual2}), we easily see that $V$ is a dual of $W$ in the notion of \cite{Gav02}.
\item[(b)]If  $\widetilde{W}:=\lbrace
(K^{*}S_{W}^{-1}\pi_{S_{W}(R(K))}W_{i},\omega_{i})\rbrace_{i\in I}$ is a Bessel fusion
sequence, then  it  is a $K$-dual for $W$  and in this case  we call it the canonical $K$-dual of $W$.
\item[(c)]  The sequence $\widetilde{W}$ is not a  Bessel fusion sequence, necessarily. In the following, we  illustrate this fact.
\end{itemize}
\end{rem}
\begin{ex}
Let $\mathcal{H} = l^{2}$  with the  standard orthonormal basis
$\lbrace e_{n}\rbrace_{n=1}^{\infty}$. Define
\begin{eqnarray*}
Ke_{i} =\begin{cases}
\begin{array}{ccc}
\sum_{m=1}^{\infty}\dfrac{1}{m^{2}}e_{2m-1}& \;
{i=1}, \\
0& \; {i=2}, \\
e_{8m}& \; {i=m+2, \quad (m\in \mathbb{N})}. \\
\end{array}
\end{cases}
\end{eqnarray*}
Then $K\in B(\mathcal{H})$ and $K^{*}: \mathcal{H}\rightarrow  \mathcal{H}$ is given by
\begin{eqnarray*}
K^{*}e_{i} =\begin{cases}
\begin{array}{ccc}
\dfrac{1}{m^{2}}e_{1}& \;
{i=2m-1}, \\
e_{m+2}& \; {i=8m}, \\
0& \; {\textit{otherwise}}. \\
\end{array}
\end{cases}
\end{eqnarray*}
Now, take
$W_{i} = \overline{span}\{e_{i}\}$, for all $i\neq 2, 4$,
$W_{2} = W_{4} =  \overline{span}\{e_{2}+e_{4}\}$
and $\omega_{i}=1$, for all $i$.
Then $\lbrace W_{i}\rbrace_{i=1}^{\infty}$ is a $K$-fusion frame. More precisely, for every  $f=\{a_{i}\}_{i=1}^{\infty}\in \mathcal{H}$ we have
\begin{equation*}
\Vert K^{*}f\Vert^{2} \leq \sum_{i=1}^{\infty}\vert a_{8m}\vert^{2}+\sum_{i=1}^{\infty}\vert a_{2m-1}\vert^{2} \leq
\sum_{i=1}^{\infty}\Vert \pi_{W_{i}}f \Vert^{2} \leq 2\Vert f\Vert^{2},
\end{equation*}
Furthermore,
\begin{equation*}
\pi_{R(K)}f = \sum_{m=1}^{\infty}a_{8m} e_{8m} + \dfrac{90}{\pi^{4}}\sum_{m=1}^{\infty} \left( \sum_{j=1}^{\infty} \dfrac{1}{j^{2}}a_{2j-1}\right)\dfrac{1}{m^{2}}e_{2m-1},
\end{equation*}
and
\begin{equation*}
S_{W}f = \left(  a_{1}, a_{2}+a_{4}, a_{3}, a_{2}+a_{4}, a_{5}, a_{2}+a_{4}, a_{6}, ... \right).
\end{equation*}
Therefore,  a direct calculation shows that
\begin{equation*}
K^{*}S_{W}^{-1}\pi_{S_{W}(R(K))}W_{2m-1} = \overline{span}\{e_{1}\},
\end{equation*}
 for all $1 \leq m< \infty$, i.e.,  $\left\{K^{*}S_{W}^{-1}\pi_{S_{W}(R(K))}W_{i}\right\}_{i=1}^{\infty}$ is not a Bessel fusion  sequence.
\end{ex}

It is worth noticing that, when  $\lbrace (\pi_{S_{W}(R(K))}W_{i},\omega_{i})\rbrace_{i\in I}$ is a Bessel fusion sequence
 with a Bessel  bound $B$ then $\widetilde{W} = \lbrace
(K^{*}S_{W}^{-1}\pi_{S_{W}(R(K))}W_{i},\omega_{i})\rbrace_{i\in I}$  is also
a Bessel fusion sequence with the Bessel bound $ B\Vert K\Vert^{2}\Vert K^{\dagger}\Vert^{2}\Vert S_{W}\Vert^{2}\Vert S_{W}^{-1}\Vert^{2}$. To show this, assume  that  $f\in \mathcal{H}$ and $f = g+h$, where $g\in R(K^{*})$ and $h\in (R(K^{*}))^{\perp}$, then
\begin{eqnarray*}
 \sum_{i\in I} \Vert \pi_{\widetilde{W}_{i}}f\Vert^{2}
&=&\sum_{i\in I} \omega_{i}^{2}\Vert \pi_{\widetilde{W}_{i}}g\Vert^{2}\\
&=& \sum_{i\in I} \omega_{i}^{2}\Vert \pi_{\widetilde{W}_{i}}K^{\dagger}Kg\Vert^{2}\\
&=& \sum_{i\in I} \omega_{i}^{2}\Vert \pi_{\widetilde{W}_{i}}K^{\dagger}\pi_{S_{W}^{-1}\pi_{S_{W}(R(K))}W_{i}}Kg\Vert^{2}\\
&\leq&  B\Vert K\Vert^{2}\Vert K^{\dagger}\Vert^{2}\Vert S_{W}\Vert^{2}\Vert S_{W}^{-1}\Vert^{2}\Vert f\Vert^{2},
\end{eqnarray*}
where the last inequality is obtained by Lemma \ref{invert4.}.

Now, we are going to present a simple method for constructing of  $K$-duals by the canonical $K$-dual. For this, let
$W = \lbrace (W_{i}, \omega_{i})\rbrace_{i\in I}$ be a $K$-fusion frame with the canonical $K$-dual $\widetilde{W}$ such that $K^{*}S_{W}^{-1}\pi_{S_{W}(R(K))}W_{j} \neq \mathcal{H}$, for some $j\in I$. This implies that
$(K^{*}S_{W}^{-1}\pi_{S_{W}(R(K))}W_{j})^{\perp} \neq \lbrace0\rbrace$. Take
\begin{eqnarray*}
V_{j} =
K^{*}S_{W}^{-1}\pi_{S_{W}(R(K))}W_{j}\oplus U_{j},
\end{eqnarray*}
  where $U_{j}$ is a closed subspace of $\left(K^{*}S_{W}^{-1}\pi_{S_{W}(R(K))}W_{j}\right)^{\perp}$, and for all $i \neq j$ consider   $V_{i} =
K^{*}S_{W}^{-1}\pi_{S_{W}(R(K))}W_{i}$. Then,  $V=\lbrace(V_{i},\omega_{i})\rbrace_{i\in I}$ is a Bessel fusion sequence
and clearly it
 is a $K$-dual of $\lbrace (W_{i}, \omega_{i})\rbrace_{i\in I}$ different  from the canonical $K$-dual. More precisely,  for every $f\in \mathcal{H}$
\begin{eqnarray*}
&\sum_{i\in I}&\omega_{i}^{2}\pi_{R(K)}\pi_{W_{i}}(S_{W}^{-1})^{*}K\pi_{V_{i}}f\\
&=&
\omega_{j}^{2}\pi_{R(K)}\pi_{W_{j}}(S_{W}^{-1})^{*}K\pi_{V_{j}} f
+ \sum_{i\in I, i\neq j}\omega_{i}^{2}\pi_{R(K)}\pi_{W_{i}}(S_{W}^{-1})^{*}K\pi_{V_{i}}f\\
&=& \omega_{j}^{2}\pi_{R(K)}\pi_{W_{j}}(S_{W}^{-1})^{*}K\pi_{U_{j}} f
+\omega_{j}^{2}\pi_{R(K)}\pi_{W_{j}}(S_{W}^{-1})^{*}K\pi_{K^{*}S_{W}^{-1}\pi_{S_{W}(R(K))}W_{j}}f\\
&+& \sum_{i\in I, i\neq j}\omega_{i}^{2}\pi_{R(K)}\pi_{W_{i}}(S_{W}^{-1})^{*}K\pi_{V_{i}}f\\
&=&\omega_{j}^{2}\pi_{R(K)}\pi_{W_{j}}(S_{W}^{-1})^{*}K\pi_{K^{*}S_{W}^{-1}\pi_{S_{W}(R(K))}W_{j}}\pi_{U_{j}}f\\
&+& \omega_{j}^{2}\pi_{R(K)}\pi_{W_{j}}(S_{W}^{-1})^{*}K\pi_{K^{*}S_{W}^{-1}\pi_{S_{W}(R(K))}W_{j}}f+ \sum_{i\in I, i\neq j}\omega_{i}^{2}\pi_{R(K)}\pi_{W_{i}}(S_{W}^{-1})^{*}K\pi_{V_{i}}f\\
&=& \sum_{i\in I}\omega_{i}^{2}\pi_{R(K)}\pi_{W_{i}}(S_{W}^{-1})^{*}K\pi_{K^{*}S_{W}^{-1}\pi_{S_{W}(R(K))}W_{i}}f=Kf.
\end{eqnarray*}
Now, let us turn to the example.
\begin{ex}\label{example2}
Suppose  $\mathcal{H}$, $K$ and $W$  are as in Example \ref{example002}. Then 
we have
\begin{equation*}
S_{W}\pi_{R(K)} = \left[
 \begin{array}{ccc}

1 \quad 1 \quad 0\\

1 \quad 1 \quad 0\\

0 \quad 0 \quad 2\\

\end{array} \right].
\end{equation*}
Therefore,
\begin{equation*}
S_{W}^{-1}\pi_{S_{W}(R(K))} = \left[
 \begin{array}{ccc}

1/4 \quad 1/4 \quad 0\\

1/4 \quad 1/4 \quad 0\\

0 \quad 0 \quad 1/2\\

\end{array} \right].
\end{equation*}
Now, a straightforward calculation shows that
$S_{W}^{-1}\pi_{S_{W}(R(K))}W_{i} = W_{i}$, for every $1 \leq i\leq 3$. Hence, the canonical $K$-dual $\widetilde{W} = \lbrace
(K^{*}S_{W}^{-1}\pi_{S_{W}(R(K))}W_{i},\omega_{i})\rbrace_{i\in I}$ is obtained as the following

\begin{center}
$K^{*}S_{W}^{-1}\pi_{S_{W}(R(K))} W_{1} =  \textit{span}\lbrace e_{1}, e_{2}\rbrace$, \\
$K^{*}S_{W}^{-1}\pi_{S_{W}(R(K))} W_{2} =  \textit{span}\lbrace e_{2}\rbrace$, \\
$K^{*}S_{W}^{-1}\pi_{S_{W}(R(K))} W_{3} =  \textit{span}\lbrace e_{1}\rbrace$.
\end{center}
Also, consider
\begin{equation*}
V_{1} = K^{*}S_{W}^{-1}\pi_{S_{W}(R(K))} W_{1},\quad V_{2} =K^{*}S_{W}^{-1}\pi_{S_{W}(R(K))} W_{2},\quad V_{3} = \textit{span}\lbrace e_{1}, e_{3}\rbrace.
\end{equation*}
Then, $\{(V_{i}, 1)\}_{i\in I}$ is a $K$-dual of $W$ different from the canonical $K$-dual.
\end{ex}
It is worth to note that, in the above example the canonical $K$-dual is exactly the unique $QK$-dual of Example \ref{example002}. This comes from the fact that, in this $K$-fusion frame $S_{W}(R(K))\subset R(K)$. More general,  we have the following result.
\begin{thm}
Let
 $K$ be a closed range operator and $W = \lbrace (W_{i}, \omega_{i})\rbrace_{i\in I}$ a $K$-fusion frame for $\mathcal{H}$.  Then,  $\widetilde{W}=\{(X_{w}^{*})_{i}W_{i}\}_{i\in I}$  if and only if $S_{W}(S_{W}(R(K)))\subseteq R(K)$.
\end{thm}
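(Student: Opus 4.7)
My plan is to compute $X_w$ in closed form via Douglas' theorem, read off $(X_w^{*})_i W_i$, and compare with $\widetilde W_i = K^{*}S_W^{-1}\pi_{S_W(R(K))}W_i$. A crucial preliminary reduction exploits positivity and self-adjointness of $S_W$: the spectral theorem, applied to the injective map $\lambda\mapsto\lambda^{2}$ on $[0,\infty)$, implies that a closed subspace $V\subseteq\mathcal H$ is $S_W$-invariant iff it is $S_W^{2}$-invariant. Combined with the hypothesis that $S_W|_{R(K)}\colon R(K)\to S_W(R(K))$ is an isomorphism (which forces $S_W(R(K))=R(K)$ once $R(K)$ is $S_W$-invariant), the stated condition $S_W(S_W(R(K)))\subseteq R(K)$ is thus equivalent to $S_W(R(K))=R(K)$. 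For the sufficiency direction, assuming $S_W(R(K))=R(K)$, the zero-extended inverse $L:=S_W^{-1}|_{R(K)}\pi_{R(K)}$ coincides with $S_W^{-1}\pi_{S_W(R(K))}$ and is self-adjoint on $\mathcal H$, and one checks that $X:=T_W^{*}LK$ satisfies $T_W X=\pi_{R(K)}K=K$ with $R(X)\subseteq R(T_W^{*})$. Douglas' uniqueness gives $X=X_w$; taking $i$-th components and adjoints together with $L^{*}=L$ then yields $(X_w^{*})_i W_i=\omega_i K^{*}LW_i=\widetilde W_i$.

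For the necessity direction, I would identify $X_w$ with $T_W^{*}S_W^{+}K$, where $S_W^{+}$ is the Moore--Penrose pseudoinverse of $S_W$, well defined on $R(K)$ because the $K$-fusion frame inequality $AKK^{*}\leq S_W$ forces $R(K)\subseteq\overline{R(S_W)}$ by Douglas' theorem. The verification ($T_W X_w=S_W S_W^{+}K=\pi_{\overline{R(S_W)}}K=K$ and $R(X_w)\subseteq R(T_W^{*})$) matches the characterization in Douglas, whence $(X_w^{*})_i W_i = K^{*}S_W^{+}W_i$. A direct comparison shows that $S_W^{+}$ and $L=S_W^{-1}\pi_{S_W(R(K))}$ agree on $S_W(R(K))$---for $y\in S_W(R(K))$ both return the unique $x\in R(K)\subseteq N(S_W)^{\perp}$ with $S_Wx=y$---so $L=S_W^{+}\pi_{S_W(R(K))}$ and hence $S_W^{+}-L=S_W^{+}\pi_{S_W(R(K))^{\perp}}$. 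The hypothesis $\widetilde W_i=(X_w^{*})_i W_i$ for every $i$ then reduces, using $\overline{\mathrm{span}}_i W_i=\overline{R(T_W)}=\overline{R(S_W)}$ and self-adjointness, to the operator inclusion $S_W^{+}(R(K))\subseteq S_W(R(K))$. Applying $S_W$ and using $S_WS_W^{+}=\pi_{\overline{R(S_W)}}$ on $R(K)$ produces $R(K)\subseteq S_W^{2}(R(K))$, which is equivalent to the stated condition by the spectral equivalence of the first paragraph.

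\textbf{Main obstacle.} The main difficulty is the necessity direction: the subspace equality $\widetilde W_i=(X_w^{*})_i W_i$ is \emph{a priori} weaker than a pointwise relation between $L$ and $S_W^{+}$ modulo $R(K)^{\perp}$, so passing to the operator inclusion $S_W^{+}(R(K))\subseteq S_W(R(K))$ relies on the specific structure $S_W^{+}-L=S_W^{+}\pi_{S_W(R(K))^{\perp}}$ together with the $K$-fusion frame closed-span property $\overline{\mathrm{span}}_i W_i=\overline{R(S_W)}$. The attendant infinite-dimensional subtleties regarding the definition and boundedness of $S_W^{+}$ on $R(K)$, and the equivalence of one-sided and two-sided invariance of $R(K)$ under the positive operator $S_W$, are handled via the closed-range hypothesis on $K$ together with the lower bound $AKK^{*}\leq S_W$.
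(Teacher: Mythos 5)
Your sufficiency direction is essentially sound and follows the same route as the paper: you exhibit an explicit solution of $T_WX=K$ with range inside $\overline{R(T_W^{*})}$ and invoke the uniqueness part of Douglas' theorem, which is exactly what the paper does with $M=T_W^{*}(S_W^{-1})^{*}K$; your preliminary observation that $S_W(S_W(R(K)))\subseteq R(K)$ is equivalent to $S_W(R(K))=R(K)$ is correct and a nice addition. The necessity direction, however, contains a genuine gap. It rests on the identification $X_w=T_W^{*}S_W^{+}K$, and your justification --- that $AKK^{*}\leq S_W$ forces $R(K)\subseteq\overline{R(S_W)}$ --- does not support it: membership in the closure of $R(S_W)$ does not place a vector in the domain $R(S_W)\oplus R(S_W)^{\perp}$ of the Moore--Penrose pseudoinverse, and Douglas applied to $AKK^{*}\leq S_W=S_W^{1/2}(S_W^{1/2})^{*}$ only yields $R(K)\subseteq R(S_W^{1/2})$, not $R(K)\subseteq R(S_W)$. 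Since the frame operator of a $K$-fusion frame need not have closed range, this can genuinely fail: in $\ell^{2}$ take $W_i=\mathrm{span}\{e_i\}$, $\omega_i^{2}=1/i$, and $K=u\otimes u$ with $u=\sum_n c_ne_n$, where $\sum_n n|c_n|^{2}<\infty$ but $\sum_n n^{2}|c_n|^{2}=\infty$; then $W$ is a $K$-fusion frame, $K$ has closed range, yet $u\notin R(S_W)$, so $S_W^{+}K$ is not even defined on $R(K)$, while $X_w$ is a perfectly good bounded operator. The paper avoids this entirely: it never uses $S_W^{+}$, only the inverse of the restriction $S_W\colon R(K)\to S_W(R(K))$ (which exists with no closed-range assumption on $S_W$) and its adjoint, identifies $X_w=T_W^{*}(S_W^{-1})^{*}K$ from the hypothesis, and then reads the conclusion off $T_WX_w=K$, since $(S_W^{-1})^{*}K$ maps $\mathcal{H}$ onto $S_W(R(K))$.

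Two further weaknesses sit on top of this. First, the passage from the subspace equalities $\widetilde W_i=(X_w^{*})_iW_i$ to the operator inclusion $S_W^{+}(R(K))\subseteq S_W(R(K))$ is precisely the step you yourself flag as the main obstacle, and it is only gestured at, not carried out. Second, even granting it, what you derive is $R(K)\subseteq S_W^{2}(R(K))$, the reverse of the inclusion in the statement; the spectral-calculus equivalence from your first paragraph (a closed subspace is $S_W$-invariant iff it is $S_W^{2}$-invariant) concerns $S_W^{2}V\subseteq V$ and says nothing about $V\subseteq S_W^{2}V$, so converting this into the stated condition would require a separate argument that you do not supply. The necessity half therefore needs to be reworked, most naturally along the paper's lines, with the restricted inverse $S_W^{-1}\colon S_W(R(K))\to R(K)$ in place of the pseudoinverse.
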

\begin{proof}
First, suppose $S_{W}(S_{W}(R(K)))\subseteq R(K)$.  Obviously,  the equation 
\begin{eqnarray}\label{equ1}
\pi_{R(K)}T_{W}X=K,
\end{eqnarray}
has a solution as $M:=T_{W}^{*}(S_{W}^{-1})^{*}K$. Applying the assumption  we obtain $T_{W}M=K$, i.e., $M$ satisfies $(\ref{DougX})$. Also, for every $f\in N(M)$ we obtain 
\begin{eqnarray*}
Kf=\pi_{R(K)}T_{W}T_{W}^{*}(S_{W}^{-1})^{*}K=\pi_{R(K)}T_{W} Mf=0.
\end{eqnarray*}
Hence, $N(M)=N(K)$ and clearly $R(M)\subseteq \overline{R(T_{W}^{*})}$. Thus, 
$M=X_{w}$ by Theorem \ref{equ0}. Also, $M_{i}=\pi_{W_{i}}(S_{W}^{-1})^{*}K$   so
\begin{eqnarray*}
M_{i}^{*}W_{i}=K^{*}S_{W}^{-1}\pi_{S_{W}(R(K))}W_{i}, \quad (i\in I),
\end{eqnarray*}
or equivalently, $\widetilde{W}=\{(X_{w}^{*})_{i}W_{i}\}_{i\in I}$. Conversely, let $\widetilde{W}=\{(X_{w}^{*})_{i}W_{i}\}_{i\in I}$. Then, 
\begin{eqnarray*}
X_{w}^{*}\{f_{i}\}_{i\in I} &=&\sum_{i\in I}(X_{w}^{*})_{i}f_{i}\\
&=&\sum_{i\in I}K^{*}S_{W}^{-1}\pi_{S_{W}(R(K))}f_{i}\\
&=&K^{*}S_{W}^{-1}\pi_{S_{W}(R(K))}T_{W}\{f_{i}\}_{i\in I},
\end{eqnarray*}
for all $\{f_{i}\}_{i\in I}\in \sum_{i\in I}\oplus W_{i}$. Therefore, $X_{w}=T_{W}^{*}(S_{W}^{-1})^{*}K$,   and   the operator $T_{W}^{*}(S_{W}^{-1})^{*}K$ satisfies 
\begin{eqnarray*}
T_{W}T_{W}^{*}(S_{W}^{-1})^{*}K=K=\pi_{R(K)}T_{W}T_{W}^{*}(S_{W}^{-1})^{*}K,
\end{eqnarray*}
i.e., $S_{W}(S_{W}(R(K)))\subseteq R(K)$, as required.
\end{proof}
In the sequel, we   characterizes all $K$-duals of   minimal $K$-fusion frames, under some condition. For this, we need to a simple  lemma, which prove it for convenience.
\begin{lem}\label{prop1R}
Let $K$ be a closed range operator and $F=\{ f_{i} \}_{i\in I}$ be a $K$-frame  for $\mathcal{H}$. Then $\{\pi_{R(K)} f_{i} \}_{i\in I}$ is also a $K$-frame  for $\mathcal{H}$ with $K$-dual $\{ K^{*}S_{F}^{-1}\pi_{S_{F}(R(K))}f_{i} \}_{i\in I}$.
\end{lem}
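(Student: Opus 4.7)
The proof naturally splits into two parts: establishing the $K$-frame property of $\{\pi_{R(K)} f_{i}\}_{i\in I}$, and verifying the $K$-dual identity from \eqref{dual1}.

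For the $K$-frame property, the upper (Bessel) bound is immediate from self-adjointness of the projection:
\begin{eqnarray*}
\sum_{i\in I}|\langle f,\pi_{R(K)}f_{i}\rangle|^{2}=\sum_{i\in I}|\langle \pi_{R(K)}f,f_{i}\rangle|^{2}\leq B\|\pi_{R(K)}f\|^{2}\leq B\|f\|^{2}.
\end{eqnarray*}
For the lower bound the key observation is that $N(K^{*})=R(K)^{\perp}$, whence $K^{*}=K^{*}\pi_{R(K)}$. Applying the $K$-frame lower bound of $F$ at $\pi_{R(K)}f$ then gives
\begin{eqnarray*}
A\|K^{*}f\|^{2}=A\|K^{*}\pi_{R(K)}f\|^{2}\leq\sum_{i\in I}|\langle \pi_{R(K)}f,f_{i}\rangle|^{2}=\sum_{i\in I}|\langle f,\pi_{R(K)}f_{i}\rangle|^{2}.
\end{eqnarray*}

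For the dual identity I would first note that $G:=\{K^{*}S_{F}^{-1}\pi_{S_{F}(R(K))}f_{i}\}_{i\in I}$ is a Bessel sequence, since $K^{*}S_{F}^{-1}\pi_{S_{F}(R(K))}$ is a bounded operator on $\mathcal{H}$ (here the factor $S_{F}^{-1}\pi_{S_{F}(R(K))}$ is the standard extension-by-zero of the inverse of the bijection $S_{F}|_{R(K)}:R(K)\to S_{F}(R(K))$, which is well defined precisely because $K$ has closed range); hence $G$ inherits the Bessel property of $F$ under the bounded operator. I would then invoke the canonical $K$-dual reconstruction formula for $K$-frames from \cite{arefi3}, which states
\begin{eqnarray*}
Kf=\sum_{i\in I}\langle f,K^{*}S_{F}^{-1}\pi_{S_{F}(R(K))}f_{i}\rangle f_{i},\qquad (f\in \mathcal{H}).
\end{eqnarray*}
Since $Kf\in R(K)$, we have $\pi_{R(K)}K=K$, so applying $\pi_{R(K)}$ to both sides and pulling it through the (strongly convergent) series produces
\begin{eqnarray*}
Kf=\sum_{i\in I}\langle f,K^{*}S_{F}^{-1}\pi_{S_{F}(R(K))}f_{i}\rangle \pi_{R(K)}f_{i},
\end{eqnarray*}
which is exactly the $K$-dual identity for $\{\pi_{R(K)}f_{i}\}_{i\in I}$.

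The only real subtlety is in the background: one must be comfortable with $S_{F}^{-1}\pi_{S_{F}(R(K))}$ as a bounded operator on all of $\mathcal{H}$ (an artifact of the closed-range hypothesis on $K$), and one must appeal to the previously established reconstruction formula for the canonical $K$-dual of a $K$-frame. Once these are taken as inputs, the lemma reduces to the two transparent projection manipulations above.
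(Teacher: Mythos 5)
Your verification of the two frame inequalities for $\{\pi_{R(K)}f_{i}\}_{i\in I}$ is correct (and more explicit than the paper, which gets the frame property only implicitly from the dual relation), and your Bessel argument for $G=\{K^{*}S_{F}^{-1}\pi_{S_{F}(R(K))}f_{i}\}_{i\in I}$ is fine. The gap is the reconstruction step: the formula you invoke, $Kf=\sum_{i\in I}\langle f,K^{*}S_{F}^{-1}\pi_{S_{F}(R(K))}f_{i}\rangle f_{i}$, is not a valid identity for a general $K$-frame and is not what \cite{arefi3} supplies. Expanding it gives $S_{F}\bigl((S_{F}|_{R(K)})^{-1}\bigr)^{*}Kf$, and only the $\pi_{R(K)}$-component of that vector equals $Kf$; since $S_{F}$ need not map $S_{F}(R(K))$ back into $R(K)$, the unprojected identity fails. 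Concretely, in $\mathbb{R}^{2}$ let $K$ be the orthogonal projection onto $\mathrm{span}\{e_{1}\}$ and $F=\{e_{1}+e_{2},\,e_{2}\}$, a $K$-frame with lower bound $1/2$. Then $S_{F}(a,b)=(a+b,a+2b)$, $S_{F}(R(K))=\mathrm{span}\{(1,1)\}$, and one computes $g_{1}=K^{*}S_{F}^{-1}\pi_{S_{F}(R(K))}f_{1}=e_{1}$, $g_{2}=\tfrac{1}{2}e_{1}$, so for $f=(a,b)$ one gets $\sum_{i}\langle f,g_{i}\rangle f_{i}=(a,\tfrac{3a}{2})\neq(a,0)=Kf$, whereas $\sum_{i}\langle f,g_{i}\rangle\pi_{R(K)}f_{i}=(a,0)=Kf$. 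In other words, the projection on the $f_{i}$'s is exactly what makes the identity true, so your step assumes a (false) strengthening of the very statement to be proved.

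The repair is a direct computation needing no external reconstruction formula, which is the paper's route: since $K$ has closed range, $T:=S_{F}|_{R(K)}:R(K)\rightarrow S_{F}(R(K))$ is invertible and $T^{*}=\pi_{R(K)}S_{F}|_{S_{F}(R(K))}$, hence for every $f\in\mathcal{H}$,
\begin{eqnarray*}
Kf &=& T^{*}(T^{-1})^{*}Kf \;=\; \pi_{R(K)}S_{F}(T^{-1})^{*}Kf\\
&=& \sum_{i\in I}\left\langle (T^{-1})^{*}Kf,\,f_{i}\right\rangle \pi_{R(K)}f_{i}
\;=\; \sum_{i\in I}\left\langle f,\,K^{*}S_{F}^{-1}\pi_{S_{F}(R(K))}f_{i}\right\rangle \pi_{R(K)}f_{i},
\end{eqnarray*}
where the last equality uses that $(T^{-1})^{*}Kf\in S_{F}(R(K))$. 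This is precisely the asserted $K$-dual relation; combined with your correct Bessel and frame-bound computations, it completes the lemma.
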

\begin{proof}
Since the operator $S_{F}:R(K)\rightarrow S_{F}(R(K))$ is invertible. This follows that $\{ K^{*}S_{F}^{-1}\pi_{S(R(K))}f_{i} \}_{i\in I}$ is a Bessel sequence. Hence
\begin{eqnarray*}
Kf 
&=& S_{F}^{*}(S_{F}^{-1})^{*}Kf\\
&=&\pi_{R(K)}S_{F}\vert_{S_{F}(R(K))}(S_{F}^{-1})^{*}Kf \\
&=&\sum_{i\in I} \left\langle \pi_{S_{F}(R(K))}(S_{F}^{-1})^{*}Kf, f_{i}\right\rangle \pi_{R(K)}f_{i}\\
&=& \sum_{i\in I} \left\langle f, K^{*}S_{F}^{-1}\pi_{S_{F}(R(K))}f_{i}\right\rangle \pi_{R(K)}f_{i},
\end{eqnarray*}
for all $f\in \mathcal{H}$. So the result follows.
\end{proof}

\begin{thm}\label{minimal dual}
Let $K$ be a closed range operator and $W = \lbrace (W_{i}, \omega_{i})\rbrace_{i\in I}$ a minimal $K$-fusion frame for $\mathcal{H}$  with the canonical $K$-dual $\widetilde{W}$. Also, assume that $\overline{span}\{W_{i}\}_{i\in I}\cap R(K)^{\perp}=\{0\}$.  Then a Bessel fusion sequence $V = \lbrace (V_{i}, \omega_{i})\rbrace_{i\in I}$ is a $K$-dual of $W$ if and only if
\begin{eqnarray*}
 K^{*}S_{W}^{-1}\pi_{S_{W}(R(K))}W_{i} \subseteq V_{i},\quad  (i\in I).
 \end{eqnarray*}
\end{thm}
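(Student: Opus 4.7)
The plan is to reduce both implications to a per-index identity by combining two key facts: (i) by Lemma \ref{2.9} applied with $T^{*}=(S_W^{-1})^{*}K$ (so that $TW_i = K^{*}S_W^{-1}\pi_{S_W(R(K))}W_i = \widetilde{W}_i$), one has $\pi_{W_i}(S_W^{-1})^{*}K=\pi_{W_i}(S_W^{-1})^{*}K\,\pi_{\overline{\widetilde{W}_i}}$; in particular $N(\pi_{W_i}(S_W^{-1})^{*}K)=\widetilde{W}_i^{\perp}$, since the reverse inclusion is a one-line adjoint computation, $\pi_{W_i}(S_W^{-1})^{*}Kf=0\iff\langle f,K^{*}S_W^{-1}\pi_{S_W(R(K))}w\rangle=0$ for all $w\in W_i$. (ii) The synthesis operator $T_W$ is injective on $\sum_{i}\oplus W_i$: if $T_W\{g_i\}=0$, then for each $j$, $\omega_j g_j=-\sum_{i\neq j}\omega_i g_i\in W_j\cap\overline{span}_{i\neq j}W_i=\{0\}$ by minimality, so $g_j=0$.

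For the direction $(\Leftarrow)$, I would assume $\widetilde{W}_i\subseteq V_i$ for every $i$, so that $\pi_{\widetilde{W}_i}\pi_{V_i}=\pi_{\widetilde{W}_i}$; combined with (i) this gives $\pi_{W_i}(S_W^{-1})^{*}K\pi_{V_i}=\pi_{W_i}(S_W^{-1})^{*}K\pi_{\widetilde{W}_i}$. Summing with weights $\omega_i^{2}$ and prepending $\pi_{R(K)}$ then yields the canonical $K$-dual reconstruction formula derived at the start of the section, so $V$ is a $K$-dual of $W$.

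For $(\Rightarrow)$, assume $V$ is a $K$-dual; subtracting the defining identities for $V$ and for $\widetilde{W}$ produces
\begin{equation*}
\pi_{R(K)}\sum_{i\in I}\omega_{i}^{2}\pi_{W_i}(S_W^{-1})^{*}K(\pi_{V_i}-\pi_{\widetilde{W}_i})f=0,\qquad (f\in\mathcal{H}).
\end{equation*}
The inner sum lies in $\overline{span}\{W_i\}_{i\in I}$, and the hypothesis $\overline{span}\{W_i\}_{i\in I}\cap R(K)^{\perp}=\{0\}$ makes $\pi_{R(K)}$ injective on this subspace, so the inner sum itself vanishes. This says that $T_W$ annihilates the element $\{\omega_i\pi_{W_i}(S_W^{-1})^{*}K(\pi_{V_i}-\pi_{\widetilde{W}_i})f\}_i\in \sum_{i}\oplus W_i$; by the injectivity from (ii), each component is zero. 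Rewriting $\pi_{W_i}(S_W^{-1})^{*}K\pi_{\widetilde{W}_i}=\pi_{W_i}(S_W^{-1})^{*}K$ via (i), this reduces to $\pi_{W_i}(S_W^{-1})^{*}K(I-\pi_{V_i})f=0$ for all $f$, so $V_i^{\perp}\subseteq N(\pi_{W_i}(S_W^{-1})^{*}K)=\widetilde{W}_i^{\perp}$, equivalently $\widetilde{W}_i\subseteq V_i$.

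The main obstacle will be the careful bookkeeping around $(S_W^{-1})^{*}$, which makes sense as a bounded operator only after the extension $S_W^{-1}\pi_{S_W(R(K))}$; verifying that Lemma \ref{2.9} delivers the sharp null-space equality $N(\pi_{W_i}(S_W^{-1})^{*}K)=\widetilde{W}_i^{\perp}$ (and not merely one inclusion) is the crucial technical input. Once fact (i) is in place, minimality together with the hypothesis on $\overline{span}\{W_i\}$ combines cleanly to pin down $\widetilde{W}_i\subseteq V_i$.
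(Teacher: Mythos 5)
Your proof is correct, but it takes a genuinely different route from the paper's. The paper reduces the statement to discrete $K$-frames: it fixes an orthonormal basis $\{e_{i,j}\}_{j\in J_{i}}$ of each $W_{i}$, observes that $F=\{\omega_{i}e_{i,j}\}$ satisfies $S_{F}=S_{W}$, shows that any $K$-dual $V$ of $W$ yields the $K$-dual $\{\pi_{V_{i}}K^{*}S_{W}^{-1}\pi_{S_{W}(R(K))}\omega_{i}e_{i,j}\}$ of the projected system $\{\omega_{i}\pi_{R(K)}e_{i,j}\}$, and then invokes the uniqueness of $K$-duals of minimal $K$-frames (Theorem 6 of \cite{arefi3}, combined with Lemma \ref{prop1R}); the minimality of $W$ and the hypothesis $\overline{span}\{W_{i}\}_{i\in I}\cap R(K)^{\perp}=\{0\}$ enter only through the minimality of that projected discrete system. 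You instead stay at the fusion level and make the role of each hypothesis explicit: the span condition gives injectivity of $\pi_{R(K)}$ on $\overline{span}\{W_{i}\}_{i\in I}$, minimality gives injectivity of $T_{W}$ on $\sum_{i\in I}\oplus W_{i}$, and subtracting the two reconstruction identities then forces $\pi_{W_{i}}(S_{W}^{-1})^{*}K(\pi_{V_{i}}-\pi_{\overline{\widetilde{W}_{i}}})=0$ componentwise, which together with $N(\pi_{W_{i}}(S_{W}^{-1})^{*}K)=\widetilde{W}_{i}^{\perp}$ (your Lemma \ref{2.9} computation) yields $\widetilde{W}_{i}\subseteq V_{i}$; the sufficiency direction is essentially the same one-line adjoint computation in both proofs. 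What your argument buys is self-containedness (no appeal to the discrete uniqueness theorem of \cite{arefi3}) and transparency about where the two structural hypotheses are used; the paper's reduction buys brevity by recycling the discrete $K$-frame machinery. If you write yours up, record two small details: the element $\{\omega_{i}\pi_{W_{i}}(S_{W}^{-1})^{*}K(\pi_{V_{i}}-\pi_{\overline{\widetilde{W}_{i}}})f\}_{i\in I}$ does lie in $\sum_{i\in I}\oplus W_{i}$ (this uses that both $V$ and $\widetilde{W}$ are Bessel, the latter because the canonical $K$-dual is assumed to exist), and the projections onto $\widetilde{W}_{i}$ must be taken onto its closure, which is harmless since $V_{i}$ is closed.
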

\begin{proof}
Suppose that   $\{e_{i,j}\}_{j\in J_{i}}$ is an orthonormal basis of $W_{i}$, for all $i\in I$. Then we can easily see that  the sequence $F    = \{\omega_{i}e_{i,j}\}_{i\in I, j\in J_{i}}$ is a $K$-minimal frame for $\mathcal{H}$ and $S_{F} = S_{W}$. Hence,   $\{\omega_{i}\pi_{R(K)}e_{i,j}\}_{i\in I, j\in J_{i}}$ is a $K$-minimal frame for $\mathcal{H}$, so   it has a unique $K$-dual by Theorem 6 in \cite{arefi3} and this dual is  $\{K^{*}S_{W}^{-1}\pi_{S_{W}(R(K))}\omega_{i}e_{i,j}\}_{i\in I, j\in J_{i}}$, by Lemma \ref{prop1R}. 
 Now, let $V$ be  a $K$-dual of $W$. Then
 \begin{eqnarray*}
Kf &=& \sum_{i\in I}\omega_{i}^{2}\pi_{R(K)}\pi_{W_{i}}(S_{W}^{-1})^{*}K\pi_{V_{i}}f\\
&=& \sum_{i\in I}\omega_{i}^{2}\pi_{R(K)}\sum_{j\in J_{i}}\langle \pi_{W_{i}}(S_{W}^{-1})^{*}K\pi_{V_{i}}f, e_{i,j}\rangle e_{i,j}\\
&=& \sum_{i\in I, j\in J_{i}}\langle f, \pi_{V_{i}} K^{*}S_{W}^{-1}\pi_{S_{W}(R(K))}\omega_{i}e_{i,j}\rangle \omega_{i}\pi_{R(K)}e_{i,j},
 \end{eqnarray*}
for every $f\in \mathcal{H}$.
This shows that the sequence $\{\pi_{V_{i}} K^{*}S_{W}^{-1}\pi_{S_{W}(R(K))}\omega_{i}e_{i,j}\}_{i\in I, j\in J_{i}}$ is a $K$-dual of $F$. Hence,
\begin{eqnarray*}
  \pi_{V_{i}} K^{*}S_{W}^{-1}\pi_{S_{W}(R(K))}\omega_{i}e_{i,j} = K^{*}S_{W}^{-1}\pi_{S_{W}(R(K))}\omega_{i}e_{i,j}, \quad (i\in I,  j\in J_{i}).
\end{eqnarray*}
 Thus
\begin{eqnarray*}
\pi_{V_{i}} K^{*}S_{W}^{-1}\pi_{S_{W}(R(K))}W_{i} = K^{*}S_{W}^{-1}\pi_{S_{W}(R(K))}W_{i},
 \end{eqnarray*}
 i.e.,
$K^{*}S_{W}^{-1}\pi_{S_{W}(R(K))}W_{i}\subseteq V_{i}$, for all $i\in I$. Conversely, if  a Bessel fusion sequence $V$ satisfies $K^{*}S_{W}^{-1}\pi_{S_{W}(R(K))}W_{i} \subseteq V_{i}$, for all $i\in I$. Then
 \begin{eqnarray*}
T_{V}\phi_{vw}^{*}T_{W}^{*}&=&\sum_{i\in I}\omega_{i}^{2}\pi_{V_{i}}K^{*}S_{W}^{-1}\pi_{S_{W}(R(K))}\pi_{W_{i}}\pi_{R(K)} \\
&=& K^{*}S_{W}^{-1}S_{W}\pi_{R(K)} = K^{*}.
 \end{eqnarray*}
This shows that $V$ is a $K$-dual of $W$.
\end{proof}
As a consequence we regain the following result, which was  proved in  \cite{arefi1} for fusion frames. 
\begin{cor}
Let $W = \lbrace (W_{i}, \omega_{i})\rbrace_{i\in I}$ be a minimal fusion frame for $\mathcal{H}$. Then a Bessel fusion sequence $V = \lbrace (V_{i}, \omega_{i})\rbrace_{i\in I}$ is a dual of $W$ if and only if $S_{W}^{-1}W_{i} \subseteq V_{i}$, for all $i\in I$.
\end{cor}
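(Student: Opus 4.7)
The plan is to obtain this corollary as a direct specialization of Theorem \ref{minimal dual} to the case $K=I_{\mathcal{H}}$. First I would point out that when $K=I_{\mathcal{H}}$, a $K$-fusion frame is exactly an ordinary fusion frame by Definition \ref{kfusion5}, and a $K$-dual coincides with the usual dual of a fusion frame in the sense of G$\breve{\textrm{a}}$vru\c{t}a, as already noted in Remark (a) following Definition \ref{10}. Thus the hypotheses and conclusion of Theorem \ref{minimal dual} can be read directly in the language of fusion frames.

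Next I would verify that the side assumptions of Theorem \ref{minimal dual} are automatically satisfied. The operator $K=I_{\mathcal{H}}$ trivially has closed range, and since $R(K)=\mathcal{H}$ we have $R(K)^{\perp}=\{0\}$, so the condition $\overline{\mathrm{span}}\{W_i\}_{i\in I}\cap R(K)^{\perp}=\{0\}$ holds vacuously. Moreover, the fusion frame operator $S_W$ is invertible on $\mathcal{H}$, so $S_W(R(K))=S_W(\mathcal{H})=\mathcal{H}$, which gives $\pi_{S_W(R(K))}=I_{\mathcal{H}}$.

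With these identifications the expression appearing in Theorem \ref{minimal dual} collapses to
\begin{eqnarray*}
K^{*}S_W^{-1}\pi_{S_W(R(K))}W_i \;=\; S_W^{-1}W_i,\qquad (i\in I),
\end{eqnarray*}
and the condition $K^{*}S_W^{-1}\pi_{S_W(R(K))}W_i\subseteq V_i$ becomes precisely $S_W^{-1}W_i\subseteq V_i$. Applying Theorem \ref{minimal dual} then yields both directions of the equivalence.

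I do not anticipate any genuine obstacle here, since the argument is purely one of substitution. The only mild subtlety is noting that the hypothesis about the intersection with $R(K)^{\perp}$ evaporates in the ordinary-fusion-frame setting, so no extra nondegeneracy of $\{W_i\}_{i\in I}$ needs to be imposed beyond minimality.
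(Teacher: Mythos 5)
Your proposal is correct and follows essentially the same route as the paper, which presents this corollary precisely as the specialization $K=I_{\mathcal{H}}$ of Theorem \ref{minimal dual}, with exactly the simplifications you list ($R(K)^{\perp}=\{0\}$, $\pi_{S_{W}(R(K))}=I_{\mathcal{H}}$, so $K^{*}S_{W}^{-1}\pi_{S_{W}(R(K))}W_{i}=S_{W}^{-1}W_{i}$). The only item worth stating explicitly is that the theorem's hypothesis that the canonical $K$-dual exists as a Bessel fusion sequence is automatic here, since $\{(S_{W}^{-1}W_{i},\omega_{i})\}_{i\in I}$ is the canonical dual fusion frame of $W$.
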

\begin{rem}\label{1212}
Consider a $K$-fusion frame $W = \lbrace (W_{i}, \omega_{i})\rbrace_{i\in I}$
 for $\mathcal{H}$ and let $F_{i} = \{f_{i,j}\}_{j\in J_{i}}$ be a frame for $W_{i}$, for each $i\in I$ with frame bounds $A_{i}$ and $B_{i}$, respectively such that $0<A = \inf_{i\in I}A_{i} \leq B = \sup_{i\in I}B_{i} <\infty$. Then the sequences $\{f_{i,j}\}_{j\in J_{i}}$  are called local frames of $W$ and $\{(W_{i},  \omega_{i}, \{f_{i,j}\}_{j\in J_{i}})\}_{i\in I}$ is called a $K$-fusion frame system. Also, if $\{\widetilde{f}_{i,j}\}_{j\in J_{i}}$ is a dual for $F_{i}$ in $W_{i}$, we call $\{\widetilde{f}_{i,j}\}_{j\in J_{i}}$  local dual frames.
\end{rem}
The following results describe the duality of $K$-fusion frames with
respect to local frames.
 \begin{thm}\label{3.8} Let $W = \lbrace (W_{i},
\omega_{i})\rbrace_{i\in I} $ be a $K$-fusion frame  and $V = \lbrace (V_{i},
\upsilon_{i})\rbrace_{i\in I}$ be a Bessel fusion sequence. Also,  let  $\lbrace
g_{i,j}\rbrace_{j\in J_{i}}$ be a local frame for $V_{i}$ with bounds $A_{i}$ and
$B_{i}$, for all $i\in I$ and the canonical local  dual frame $\lbrace \widetilde{g}_{i,j}\rbrace_{j\in J_{i}}$. Then $V$
is  a $K$-dual of $W$ if and only if   the sequence $G = \lbrace
\upsilon_{i}g_{i,j}\rbrace_{i\in I, j\in J_{i}}$ is a $K$-dual of $F
= \lbrace  \omega_{i} \pi_{R(K)}\pi_{W_{i}}(S_{W}^{-1})^{*}K\widetilde{g}_{i,j}\rbrace_{i\in I,
j\in J_{i}}$.\end{thm}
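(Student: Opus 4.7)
The plan is to reduce the fusion $K$-duality identity from Definition \ref{10} to the discrete $K$-duality relation \eqref{dual1} by applying the canonical reconstruction inside each $V_i$. Unfolding Definition \ref{10}, $V$ is a $K$-dual of $W$ precisely when $Kf = \sum_{i}\omega_i\upsilon_i\,\pi_{R(K)}\pi_{W_i}(S_W^{-1})^*K\pi_{V_i}f$ for every $f\in\mathcal{H}$, while $G$ is a (discrete) $K$-dual of $F$ precisely when $Kf = \sum_{i,j}\langle f,\upsilon_ig_{i,j}\rangle\,\omega_i\pi_{R(K)}\pi_{W_i}(S_W^{-1})^*K\widetilde{g}_{i,j}$. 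The bridge between these two identities is the local reconstruction $\pi_{V_i}f = \sum_{j\in J_i}\langle f,g_{i,j}\rangle\widetilde{g}_{i,j}$, which holds because $\{\widetilde{g}_{i,j}\}_{j\in J_i}\subseteq V_i$ is the canonical dual of $\{g_{i,j}\}_{j\in J_i}$ inside $V_i$ and each $g_{i,j}\in V_i$.

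For the forward direction, I would substitute this local expansion into the fusion identity and pull the bounded operator $\pi_{R(K)}\pi_{W_i}(S_W^{-1})^*K$ inside the $j$-sum, then read off the discrete identity directly. The converse reverses the chain: starting from the discrete $K$-dual relation for $G$ and $F$, group the inner $j$-sum back into $\pi_{V_i}f$ to recover the identity in Definition \ref{10}.

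Before either implication is meaningful one must verify that both $F$ and $G$ are Bessel. For $G$, the uniform upper bound $B=\sup_i B_i<\infty$ together with the Bessel bound $B_V$ of $V$ yields $\sum_{i,j}|\langle f,\upsilon_ig_{i,j}\rangle|^2\le B\sum_i\upsilon_i^2\|\pi_{V_i}f\|^2\le BB_V\|f\|^2$. For $F$, the canonical dual $\{\widetilde{g}_{i,j}\}_{j\in J_i}$ has uniform upper bound $A^{-1}=(\inf_i A_i)^{-1}$; combining this with the operator norm of $\pi_{R(K)}\pi_{W_i}(S_W^{-1})^*K$ and the Bessel bound of $W$ produces the analogous estimate for $F$. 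The main technical point will be justifying the interchange of the bounded operator with the $j$-series and the rearrangement of the double sum over $(i,j)$; both are legitimate thanks to the absolute convergence guaranteed by these Bessel estimates, so the remainder of the argument is straightforward bookkeeping.
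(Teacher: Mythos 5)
Your proposal is correct and follows essentially the same route as the paper: both arguments first verify the Bessel property of $G$ (via $\sup_i B_i$ and the Bessel bound of $V$) and of $F$ (via the bound $1/\inf_i A_i$ for the canonical local duals, the norm of $\pi_{R(K)}\pi_{W_i}(S_W^{-1})^{*}K$, and the upper bound of $W$), and then identify the two duality identities through the local reconstruction $\sum_{j\in J_i}\langle \pi_{V_i}f,g_{i,j}\rangle\widetilde{g}_{i,j}=\pi_{V_i}f$, which turns $\sum_{i,j}\langle f,\upsilon_i g_{i,j}\rangle\,\omega_i\pi_{R(K)}\pi_{W_i}(S_W^{-1})^{*}K\widetilde{g}_{i,j}$ into $\pi_{R(K)}T_W\phi_{vw}T_V^{*}f$. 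No gaps; the interchange of sums you flag is justified exactly as in the paper by the Bessel estimates.
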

\begin{proof}  We first show that $F$ and $G$ are  Bessel sequences for $\mathcal{H}$.
 \begin{eqnarray*} &&\sum_{i\in I, j\in
J_{i}} \vert \langle f,
\omega_{i}\pi_{R(K)}\pi_{W_{i}}(S_{W}^{-1})^{*}K\widetilde{g}_{i,j} \rangle\vert^{2} \\
&=&
\sum_{i\in I, j\in J_{i}} \vert \langle
\omega_{i}K^{*}S_{W}^{-1}\pi_{S_{W}(R(K))}\pi_{W_{i}}\pi_{R(K)}f, \widetilde{g}_{i,j}\rangle\vert^{2}\\
&=&\sum_{i\in I}\omega_{i}^{2}\sum_{j\in J_{i}} \vert \langle
\pi_{V_{i}}K^{*}S_{W}^{-1}\pi_{S_{W}(R(K))}\pi_{W_{i}}\pi_{R(K)}f, \widetilde{g}_{i,j}\rangle\vert^{2}\\
&\leq& \sum_{i\in I} \dfrac{\omega_{i}^{2}}{A_{i}} \Vert
\pi_{V_{i}}K^{*}S_{W}^{-1}\pi_{S_{W}(R(K))}\pi_{W_{i}}\pi_{R(K)}f\Vert^{2}\\
&\leq& \dfrac{\Vert K\Vert^{2}\Vert S_{W}^{-1}\pi_{S_{W}(R(K))}\Vert^{2}}{A} \sum_{i\in I} \omega_{i}^{2}
\Vert \pi_{W_{i}}\pi_{R(K)}f\Vert^{2}\\
&\leq&  \dfrac{\Vert S_{W}^{-1}\pi_{S_{W}(R(K))}\Vert^{2}}{A} D\Vert K\Vert^{2} \Vert f\Vert^{2},\end{eqnarray*}
for every $f\in \mathcal{H}$, where $D$ is an upper bound for $W$ and $A = \inf_{i\in I}A_{i}$. Moreover, we have  \begin{eqnarray*}
\sum_{i\in I, j\in
J_{i}} \vert \langle f,
\upsilon_{i}g_{i,j}\rangle\vert^{2} &=& \sum_{i\in I}\upsilon_{i}^{2} \sum_{ j\in
J_{i}}\vert \langle \pi_{V_{i}}f,
g_{i,j}\rangle\vert^{2} \\
&\leq& \sum_{i\in I} B_{i} \upsilon_{i}^{2} \Vert \pi_{V_{i}}f\Vert^{2}\\
& \leq& B \sum_{i\in I}\upsilon_{i}^{2} \Vert \pi_{V_{i}}f\Vert^{2},
\end{eqnarray*}
where $ B = \sup_{i\in I}B_{i}$.
 On the other hand,
\begin{eqnarray*} T_{G}T_{F}^{*}f &=&   \sum_{i\in I, j\in J_{i}} \langle f, \upsilon_{i}g_{i,j}\rangle  \omega_{i} \pi_{R(K)}\pi_{W_{i}}(S_{W}^{-1})^{*}K\widetilde{g}_{i,j}\\
&=& \sum_{i\in I}
\omega_{i}\upsilon_{i} \pi_{R(K)}\pi_{W_{i}}(S_{W}^{-1})^{*}K \sum_{j\in J_{i}} \langle
\pi_{V_{i}}f,
g_{i,j}\rangle \widetilde{g}_{i,j}\\
&=&
 \sum_{i\in I}\omega_{i}\upsilon_{i} \pi_{R(K)}\pi_{W_{i}}(S_{W}^{-1})^{*}K\pi_{V_{i}}f = \pi_{R(K)}T_{W}\phi_{vw}T^{*}_{V}f.\end{eqnarray*}
Hence, $V=\lbrace(V_{i},\upsilon_{i})\rbrace_{i\in I}$ is a $K$-dual of
$W$ if and only if  $G$ is a $K$-dual of $F$.\end{proof}
By a similar argument to the proof of Theorem \ref{3.8} one may prove the next theorem.

\begin{thm}\label{local2}
Let $W = \lbrace (W_{i}, \omega_{i})\rbrace_{i\in I}$ be a $K$-fusion frame
with  bounds $A$ and $B$, respectively. A Bessel fusion sequence
$V=\lbrace(V_{i},\upsilon_{i})\rbrace_{i\in I}$ is a $K$-dual of $W$ if and
only if $G=\lbrace \upsilon_{i}\pi_{V_{i}}e_{j}\rbrace_{i\in I, j\in J}$ is
a $K$-dual of $F=\lbrace \omega_{i}
\pi_{R(K)}\pi_{W_{i}}(S_{W}^{-1})^{*}Ke_{j}\rbrace_{i\in I, j\in J}$,
where $\lbrace e_{j}\rbrace_{j\in J}$ is an orthonormal basis of
$\mathcal{H}$.
\end{thm}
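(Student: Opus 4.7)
The plan is to mirror the proof of Theorem~\ref{3.8}, with the orthonormal basis $\lbrace e_{j}\rbrace_{j\in J}$ of $\mathcal{H}$ playing the role formerly played by the local frames $\lbrace g_{i,j}\rbrace_{j\in J_{i}}$ and their canonical duals $\lbrace \widetilde{g}_{i,j}\rbrace_{j\in J_{i}}$. Since $\lbrace e_{j}\rbrace_{j\in J}$ is simultaneously a Parseval frame for $\mathcal{H}$ and self-dual (i.e.\ $A_{i}=B_{i}=1$ with $\widetilde{e}_{j}=e_{j}$), the computations of Theorem~\ref{3.8} collapse to cleaner Parseval identities.

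First, I would check that $F$ and $G$ are Bessel sequences in $\mathcal{H}$. For $G$, Parseval gives $\sum_{j\in J}\vert\langle f,\upsilon_{i}\pi_{V_{i}}e_{j}\rangle\vert^{2}=\upsilon_{i}^{2}\Vert\pi_{V_{i}}f\Vert^{2}$, so summing in $i$ uses only the Bessel bound of $V$. For $F$, moving the operators to the other side of the inner product and applying Parseval in $j$ yields
\[
\sum_{j\in J}\bigl\vert\langle f,\omega_{i}\pi_{R(K)}\pi_{W_{i}}(S_{W}^{-1})^{*}Ke_{j}\rangle\bigr\vert^{2}=\omega_{i}^{2}\bigl\Vert K^{*}S_{W}^{-1}\pi_{S_{W}(R(K))}\pi_{W_{i}}\pi_{R(K)}f\bigr\Vert^{2},
\]
which is dominated by $\Vert K\Vert^{2}\Vert S_{W}^{-1}\pi_{S_{W}(R(K))}\Vert^{2}\omega_{i}^{2}\Vert\pi_{W_{i}}\pi_{R(K)}f\Vert^{2}$; summing in $i$ invokes the upper bound $B$ of $W$.

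The core calculation is then to expand $T_{G}T_{F}^{*}f$ exactly as in the proof of Theorem~\ref{3.8}, pull the operator $\pi_{R(K)}\pi_{W_{i}}(S_{W}^{-1})^{*}K$ outside the inner $j$-sum, and apply the reproducing identity $\sum_{j\in J}\langle\pi_{V_{i}}f,e_{j}\rangle e_{j}=\pi_{V_{i}}f$ for the orthonormal basis to obtain
\[
T_{G}T_{F}^{*}f=\sum_{i\in I}\omega_{i}\upsilon_{i}\,\pi_{R(K)}\pi_{W_{i}}(S_{W}^{-1})^{*}K\pi_{V_{i}}f=\pi_{R(K)}T_{W}\phi_{vw}T_{V}^{*}f.
\]
Thus the identity $\pi_{R(K)}T_{W}\phi_{vw}T_{V}^{*}=K$ defining a $K$-dual of $W$ is equivalent to the corresponding reconstruction identity characterising $G$ as a $K$-dual of $F$, yielding both implications simultaneously.

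The only technical point to watch is the interchange of the $i$- and $j$-summations in the expansion of $T_{G}T_{F}^{*}f$; this is justified by the Bessel bounds established in the first step, after which everything reduces to routine Parseval bookkeeping. No genuine new obstacle arises beyond the algebra of Theorem~\ref{3.8}.
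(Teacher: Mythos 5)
Your proposal is correct and follows exactly the route the paper intends: the paper omits the proof of this theorem, stating only that it follows "by a similar argument to the proof of Theorem \ref{3.8}", and your argument is precisely that argument specialized to the orthonormal basis $\{e_{j}\}_{j\in J}$ (self-dual, $A_{i}=B_{i}=1$), with the Bessel estimates and the identity $T_{G}T_{F}^{*}=\pi_{R(K)}T_{W}\phi_{vw}T_{V}^{*}$ worked out correctly.
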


The following result shows that for every  local frame of a $K$-fusion frame we can construct some  $K$-frames with associated $K$-duals.
\begin{prop}
Let $W = \lbrace (W_{i}, \omega_{i})\rbrace_{i\in I}$ be a
$K$-fusion frame for $\mathcal{H}$ and $\{f_{i,j}\}_{j\in J_{i}}$ be a local frame for $W_{i}$ with the local dual frame $\{\widetilde{f}_{i,j}\}_{j\in J_{i}}$, for all $i\in I$. Then $\{\omega_{i}f_{i,j}\}_{i\in I, j\in J_{i}}$ is a $K$-frame for $\mathcal{H}$ with $K$-dual $G = \{X_{i}^{*}\widetilde{f}_{i,j}\}_{i\in I, j\in J_{i}}$, where the operator $X$ is as in $(\ref{DougX})$.
\end{prop}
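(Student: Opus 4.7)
The plan is to first verify the $K$-frame property of $F := \{\omega_i f_{i,j}\}_{i \in I, j \in J_i}$ directly from the definitions, then use the operator $X$ from (\ref{DougX}) together with the local reconstruction formula to confirm the $K$-duality relation for $G = \{X_i^* \widetilde{f}_{i,j}\}_{i \in I, j \in J_i}$.

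First I would establish that $F$ is a $K$-frame. For any $f \in \mathcal{H}$, expand
\begin{equation*}
\sum_{i \in I, j \in J_i} |\langle f, \omega_i f_{i,j}\rangle|^2 = \sum_{i \in I} \omega_i^2 \sum_{j \in J_i} |\langle \pi_{W_i} f, f_{i,j}\rangle|^2,
\end{equation*}
since $f_{i,j} \in W_i$. Using the uniform local bounds $0 < A \leq B < \infty$ from Remark \ref{1212} and the $K$-fusion frame bounds $A_W, B_W$ of $W$, this quantity is sandwiched between $A A_W \|K^* f\|^2$ and $B B_W \|f\|^2$, proving $F$ is a $K$-frame with bounds $A A_W$ and $B B_W$.

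Next I would compute the reconstruction using $G$. Since $X_i f \in W_i$ and $\{\widetilde{f}_{i,j}\}_{j \in J_i}$ is a dual of the local frame $\{f_{i,j}\}_{j \in J_i}$ in $W_i$, the local reconstruction formula gives $\sum_{j \in J_i} \langle X_i f, \widetilde{f}_{i,j}\rangle f_{i,j} = X_i f$ for every $i \in I$. Therefore
\begin{equation*}
\sum_{i \in I, j \in J_i} \langle f, X_i^* \widetilde{f}_{i,j}\rangle \, \omega_i f_{i,j} = \sum_{i \in I} \omega_i \sum_{j \in J_i} \langle X_i f, \widetilde{f}_{i,j}\rangle f_{i,j} = \sum_{i \in I} \omega_i X_i f = T_W(Xf) = Kf,
\end{equation*}
where the last equality uses (\ref{DougX}). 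This is precisely the $K$-dual identity (\ref{dual1}).

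The main remaining obstacle is to verify that $G$ is a Bessel sequence, which requires handling the local duals carefully. Assuming the local dual frames $\{\widetilde{f}_{i,j}\}_{j \in J_i}$ possess uniform upper bounds $\widetilde{B}_i \leq \widetilde{B} < \infty$ (which holds, for instance, for the canonical local duals where $\widetilde{B}_i = 1/A_i \leq 1/A$), I would estimate
\begin{equation*}
\sum_{i \in I, j \in J_i} |\langle f, X_i^* \widetilde{f}_{i,j}\rangle|^2 = \sum_{i \in I} \sum_{j \in J_i} |\langle X_i f, \widetilde{f}_{i,j}\rangle|^2 \leq \widetilde{B} \sum_{i \in I} \|X_i f\|^2 = \widetilde{B} \|X f\|^2 \leq \widetilde{B} \|X\|^2 \|f\|^2,
\end{equation*}
using that $\|Xf\|^2 = \sum_{i \in I} \|X_i f\|^2$ by definition of the direct sum norm. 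Combining the Bessel property with the reconstruction identity gives that $G$ is a $K$-dual of $F$, completing the proof.
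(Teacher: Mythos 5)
Your proof is correct and follows essentially the same route as the paper: the same Bessel estimate for $G$ via $\langle f, X_i^*\widetilde{f}_{i,j}\rangle=\langle X_i f,\widetilde{f}_{i,j}\rangle$ and $\sum_{i}\Vert X_i f\Vert^2=\Vert Xf\Vert^2\le\Vert X\Vert^2\Vert f\Vert^2$, and the same reconstruction computation collapsing the local duals to $X_i f$ and then using $T_W X=K$. The only differences are cosmetic improvements on your part: you prove the $K$-frame property of $F$ directly rather than citing Theorem 3.2 of \cite{Cas04}, and you make explicit the uniform upper bound needed for the local dual frames (satisfied by the canonical local duals), a point the paper passes over by invoking the bound $B$ of Remark \ref{1212}.
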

\begin{proof}
  First, note that $G = \{X_{i}^{*}\widetilde{f}_{i,j}\}_{i\in I, j\in J_{i}}$ is a Bessel sequence. In fact,
   \begin{eqnarray*}
\sum_{i\in I, j\in J_{i}}\vert \langle f, X_{i}^{*}\widetilde{f}_{i,j}\rangle\vert^{2}&=& \sum_{i\in I} \sum_{j\in J_{i}}\vert \langle \pi_{W_{i}}X_{i}f, \widetilde{f}_{i,j}\rangle\vert^{2}\\
&\leq& B  \sum_{i\in I}\Vert \pi_{W_{i}}X_{i}f\Vert^{2}\\
&\leq& B  \Vert X\Vert^{2}\Vert f\Vert^{2},
 \end{eqnarray*}
for every $f\in \mathcal{H}$  where $B$  is given by Remark \ref{1212}. Also, similar to Theorem 3.2 of \cite{Cas04}, we can see that $F = \{\omega_{i}f_{i,j}\}_{i\in I, j\in J_{i}}$ is a $K$-frame for $\mathcal{H}$. Moreover,
  \begin{eqnarray*}
\sum_{i\in I, j\in J_{i}} \langle f, X_{i}^{*}\widetilde{f}_{i,j}\rangle \omega_{i}f_{i,j}
&=& \sum_{i\in I}\omega_{i}\sum_{j\in J_{i}} \langle \pi_{W_{i}}X_{i}f, \widetilde{f}_{i,j}\rangle f_{i,j}\\
&=&  \sum_{i\in I}\omega_{i} \pi_{W_{i}}X_{i}f \\
&=& T_{W}Xf  = Kf.
 \end{eqnarray*}
Hence $G$ is a $K^{*}$-frame and also a
  $K$-dual of  $F$.
\end{proof}

\section{Resolution of  bounded linear operators}

\smallskip
\goodbreak

The concept of  resolution of the identity has been considered in \cite{ Cas04, khosravi15}. In this section,  we introduce the notion of resolution of a bounded linear operator $K\in B(\mathcal{H})$, which lead to more reconstructions from the elements of  $R(K)$.
\smallskip
\goodbreak
Let $K\in B(\mathcal{H})$ and $\{\theta_{i}\}_{i\in I}$ be  a family of bounded linear operators  on $\mathcal{H}$, we say $\{\theta_{i}\}_{i\in I}$ is an  $l^{2}$-resolution of  $K$  with  respect to  a family of weights $\{\omega_{i}\}_{i\in I}$ for $\mathcal{H}$   whenever there exists a positive constant $B$ such that
\begin{itemize}
\item[(i)] $Kf = \sum_{i\in I}\omega_{i}^{2}\theta_{i}f$,
\item[(ii)] $\sum_{i\in I}\omega_i^2\|\theta_{i}f\|^2\leq B\|f\|^{2}$,
\end{itemize}
for every $f\in \mathcal{H}$.
If $\theta := \{\theta_{i}\}_{i\in I}$ only  satisfies $(i)$ we say $\theta$ is a resolution of the operator $K$.
\begin{rem}\label{cons. l-reso1}
\begin{itemize}

\item[(1)] One can easily shows that for every $l^{2}$-resolution $\{\theta_{i}\}_{i\in I}$ of an operator  $K\in  B(\mathcal{H})$ there exists $A>0$ such that
\begin{eqnarray*}
A\|Kf\|^{2}\leq \sum_{i\in I}\omega_i^2\|\theta_{i}f\|^2
\end{eqnarray*}
\item[(2)] Let  $W=\lbrace
(W_{i},\omega_{i})\rbrace_{i\in I}$ be a $K$-fusion frame  for $\mathcal{H}$. Then

\item[a)]   There exists a bounded operator $X\in B(\mathcal{H},  \sum_{i\in
I}\oplus W_{i} )$, such that $K = T_{W}X$ by Theorem \ref{equ0}. Hence, the operators   $\theta_{i}: \mathcal{H}\rightarrow W_{i}$  given by  $\theta_{i}f = X_{i}f$, where $X_{i}f$ is the $i$-th component of $Xf$, constitute an $l^{2}$-resolution of  $K$ with  respect to  the family of weights $\{\sqrt{\omega_{i}}\}_{i\in I}$.
\item[b)] Define $\theta_{i}\in B(\mathcal{H})$ by $\theta_{i} = \pi_{R(K)}\pi_{W_{i}}(S_{W}^{-1})^{*}K$, for all $i\in I$. Then $\{\theta_{i}\}_{i\in I}$ is an  $l^{2}$-resolution of  $K$ with  respect to $\{\omega_{i}\}_{i\in I}$.
\item[c)]Suppose $\theta_{i}\in B(\mathcal{H})$ is given by $\theta_{i} = S_{W}^{-1}\pi_{S_{W}(R(K))}\pi_{W_{i}}K$, for all $i\in I$. Then $\{\theta_{i}\}_{i\in I}$   is an  $l^{2}$-resolution of  $K$ with  respect to   $\{\omega_{i}\}_{i\in I}$.
\end{itemize}
\end{rem}
As we observed,  by using $K$-fusion frames we will obtain many resolutions  of the operator $K$. In the following  proposition, we show  that by an  $l^{2}$-resolution of  the operator $K$ one may construct a $K$-fusion frame.
\begin{prop}
Let $\{\theta_{i}\}_{i\in I}$ be an $l^{2}$-resolution of  $K$ with respect to $\{\omega_{i}\}_{i\in I}$ for $\mathcal{H}$, such that   $W=\{(\overline{R(\theta_{i})}, \omega_{i})\}_{i\in I}$ constitute a Bessel fusion sequence. Then $W$ is a  $K$-fusion frame for $\mathcal{H}$.
\end{prop}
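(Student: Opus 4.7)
The plan is to observe that since $W$ is assumed to be a Bessel fusion sequence, only the lower $K$-fusion frame bound must be produced. I would extract this bound by a standard duality–plus–Cauchy–Schwarz argument that exploits the key observation $\theta_i g\in \overline{R(\theta_i)}=W_i$, which lets one insert the orthogonal projection $\pi_{W_i}$ on the $f$-side.

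Concretely, first I would write, using the defining reconstruction $(i)$ of an $l^{2}$-resolution,
\begin{equation*}
\langle f, Kg\rangle \;=\; \sum_{i\in I}\omega_i^{2}\langle f,\theta_i g\rangle \;=\; \sum_{i\in I}\omega_i^{2}\langle \pi_{W_i}f,\theta_i g\rangle,
\end{equation*}
where the last equality uses that $\theta_i g\in W_i$. Applying the Cauchy--Schwarz inequality in $\ell^{2}$ with weights $\omega_i$, and then invoking property $(ii)$ of the $l^{2}$-resolution, gives
\begin{equation*}
|\langle f,Kg\rangle|^{2}\;\leq\;\Bigl(\sum_{i\in I}\omega_i^{2}\|\pi_{W_i}f\|^{2}\Bigr)\Bigl(\sum_{i\in I}\omega_i^{2}\|\theta_i g\|^{2}\Bigr)\;\leq\;B\,\|g\|^{2}\sum_{i\in I}\omega_i^{2}\|\pi_{W_i}f\|^{2}.
\end{equation*}

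Next I would take the supremum over $g\in\mathcal{H}$ with $\|g\|=1$ on the left-hand side. Since $|\langle f,Kg\rangle|=|\langle K^{*}f,g\rangle|$, this supremum equals $\|K^{*}f\|^{2}$, so
\begin{equation*}
\|K^{*}f\|^{2}\;\leq\;B\sum_{i\in I}\omega_i^{2}\|\pi_{W_i}f\|^{2}, \qquad (f\in\mathcal{H}),
\end{equation*}
which gives the lower $K$-fusion frame bound $A=1/B$. Combining with the Bessel assumption yields the $K$-fusion frame inequality $(\ref{kfusion})$.

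I do not anticipate any serious obstacle here; the only slightly non-obvious step is noticing that one should write $\langle f,Kg\rangle$ instead of attempting to expand $\|K^{*}f\|^{2}$ directly, because that is what allows the projection $\pi_{W_i}$ to be inserted on the $f$-factor. Everything else is a weighted Cauchy--Schwarz together with the Bessel bound on $\{\theta_i\}_{i\in I}$ that is already part of the definition of an $l^{2}$-resolution.
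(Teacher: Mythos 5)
Your proof is correct and follows essentially the same route as the paper: reconstruct $Kg$ via property (i), insert $\pi_{W_i}$ using $\theta_i g\in\overline{R(\theta_i)}=W_i$, apply weighted Cauchy--Schwarz, and bound with the Bessel constant of $\{\theta_i\}_{i\in I}$, obtaining the lower bound $A=1/B$. The only cosmetic difference is that the paper specializes to $g=K^{*}f$ and divides by $\|K^{*}f\|^{2}$, whereas you take the supremum over unit vectors $g$; both yield the same inequality.
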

\begin{proof}
By using assumption, it is enough to show the existence of a lower bound for $W$,
\begin{eqnarray*}
\Vert K^{*}f\Vert^{4} &=& \vert \langle K^{*}f, K^{*}f\rangle\vert^{2}\\
&=& \left( \sum_{i\in I}\omega_{i}^{2}\langle \pi_{\overline{R(\theta_{i})}}f, \theta_{i}K^{*}f\rangle \right)^{2}\\
&\leq& \sum_{i\in I}\omega_{i}^{2}\Vert \pi_{\overline{R(\theta_{i})}}f\Vert^{2}  \sum_{i\in I}\omega_{i}^{2}\Vert \theta_{i}K^{*}f\Vert^{2} \\
&\leq& B\Vert K^{*}f\Vert^{2}\sum_{i\in I}\omega_{i}^{2}\Vert \pi_{\overline{R(\theta_{i})}}f\Vert^{2},
\end{eqnarray*}
 for every $f\in \mathcal{H}$, in which $B$ is an upper bound of $\{\theta_{i}\}_{i\in I}$. Thus, the result follows.
\end{proof}
The next theorem shows that the $l^{2}$-resolution constructed by $X_{w}$  has minimum $l^{2}$-norm between all $l^{2}$-resolutions of the operator $K$, where $X_{w}$ is as in $(\ref{XW})$.
\begin{thm}\label{minimal norm}
Let $W=\lbrace
(W_{i},\omega_{i})\rbrace_{i\in I}$ be a $K$-fusion frame for $\mathcal{H}$ and the operators $\theta_{i}: \mathcal{H}\rightarrow W_{i}$ constitute an $l^{2}$-resolution of  $K$. Then
\begin{eqnarray*}
\sum_{i\in I}\|\pi_{W_{i}}(X_{w}f)_{i\in I}\|^2\leq  \sum_{i\in
I}\|\theta_{i}f\|^2, \quad (f\in \mathcal{H}).
\end{eqnarray*}
Furthermore,
\begin{eqnarray*}
\sum_{i\in I}\|\pi_{W_{i}}(X_{w}f)_{i\in I} - \omega_{i}\pi_{W_i}f\|^2\leq  \sum_{i\in
I}\|\theta_{i}f - \omega_{i}\pi_{W_i}f\|^2, \quad (f\in \mathcal{H}).
\end{eqnarray*}
\end{thm}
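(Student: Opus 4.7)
The plan is to interpret $X_{w}$ as the minimum-norm preimage of $K$ under $T_{W}$ and to invoke the orthogonality built into Douglas' theorem (Theorem \ref{equ0}(c)), from which both inequalities will drop out by a single Pythagorean splitting.

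First I would package the hypothesis into a single operator. Given the $l^{2}$-resolution $\{\theta_{i}\}_{i\in I}$, define $\Theta:\mathcal{H}\to\sum_{i\in I}\oplus W_{i}$ by $\Theta f:=\{\theta_{i}f\}_{i\in I}$. With the weight convention of Remark \ref{cons. l-reso1}(a), the $l^{2}$-summability axiom makes $\Theta$ bounded with $\|\Theta f\|^{2}=\sum_{i\in I}\|\theta_{i}f\|^{2}$, and the reconstruction formula is precisely $T_{W}\Theta=K$. Setting $D:=\Theta-X_{w}$ we then have $T_{W}D=0$, i.e., $R(D)\subseteq N(T_{W})=\overline{R(T_{W}^{*})}^{\perp}$. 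On the other hand, Theorem \ref{equ0}(c) applied to $X_{w}$ gives $R(X_{w})\subseteq\overline{R(T_{W}^{*})}$. Hence $X_{w}f\perp Df$ for every $f\in\mathcal{H}$, and Pythagoras yields
\begin{eqnarray*}
\sum_{i\in I}\|\theta_{i}f\|^{2}=\|\Theta f\|^{2}=\|X_{w}f\|^{2}+\|Df\|^{2}\geq \|X_{w}f\|^{2}.
\end{eqnarray*}
Since each component $(X_{w}f)_{i}$ already belongs to $W_{i}$, the projection $\pi_{W_{i}}$ acts as the identity on it, so $\|X_{w}f\|^{2}=\sum_{i\in I}\|\pi_{W_{i}}(X_{w}f)_{i}\|^{2}$, which is the first inequality.

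For the second inequality I would reduce to the first by a translation. Note that $T_{W}^{*}f=\{\omega_{i}\pi_{W_{i}}f\}_{i\in I}$, and set $Y:=X_{w}-T_{W}^{*}$ and $\Psi:=\Theta-T_{W}^{*}$. Both operators satisfy $T_{W}Z=K-S_{W}$, so $\Psi-Y\in N(T_{W})$. Moreover, since $R(T_{W}^{*})\subseteq\overline{R(T_{W}^{*})}$ and $R(X_{w})\subseteq\overline{R(T_{W}^{*})}$, also $R(Y)\subseteq\overline{R(T_{W}^{*})}$. The identical Pythagorean splitting then gives $\|\Psi f\|^{2}\geq\|Yf\|^{2}$, which expanded componentwise is exactly the second displayed inequality. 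The only delicate point is aligning the weight conventions so that $\Theta$ is a genuine lift of $K$ along $T_{W}$; once this is settled, no quantitative estimates enter and everything reduces to the ``orthogonal complement kills squared-norm differences'' geometry underlying Douglas' theorem.
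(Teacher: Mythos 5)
Your proof is correct, and it takes a somewhat different route from the paper's. For the first inequality the paper packages the resolution into $\theta f=\{\theta_i f\}_{i\in I}$ with $T_W\theta=K$ and then appeals to the Douglas characterization $\Vert X_w\Vert^2=\inf\{\alpha>0:\Vert K^*f\Vert^2\le\alpha\Vert T_W^*f\Vert^2\}$; as written this only yields the operator-norm comparison $\Vert X_w\Vert\le\Vert\theta\Vert$, whereas your splitting $\theta f=X_w f+(\theta-X_w)f$, with $R(X_w)\subseteq\overline{R(T_W^*)}$ from Theorem \ref{equ0}(c) and $R(\theta-X_w)\subseteq N(T_W)=\overline{R(T_W^*)}^{\perp}$, gives the pointwise inequality that the theorem actually asserts; so on this part your argument is the tighter one. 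For the second inequality the paper expands $\sum_i\Vert(X_wf)_i-\omega_i\pi_{W_i}f\Vert^2$ and $\sum_i\Vert\theta_if-\omega_i\pi_{W_i}f\Vert^2$, notes that the cross terms on both sides equal $2\,\mathrm{Re}\langle Kf,f\rangle$ because $T_WX_wf=Kf=T_W\theta f$, and then invokes the first inequality; you instead translate by $T_W^*$ (i.e.\ compare $X_w-T_W^*$ with $\theta-T_W^*$) and rerun the identical Pythagorean splitting. The two are essentially equivalent, but your version avoids the expansion and exhibits both inequalities as instances of a single orthogonality fact, while the paper's expansion is more elementary and makes the role of the reconstruction identity explicit. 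One caveat you rightly flag and should keep visible: with the paper's conventions $T_W\Theta f=\sum_i\omega_i\theta_if$ while the resolution gives $Kf=\sum_i\omega_i^2\theta_if$, so $T_W\Theta=K$ holds only after aligning the weights (as in Remark \ref{cons. l-reso1}, or by absorbing a factor $\omega_i$ into $\theta_i$); the paper's own proof glosses over the same point, so this is a shared convention issue rather than a gap in your argument.
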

\begin{proof}
Suppose that $\{\theta_{i}\}_{i\in I}$ is an  $l^{2}$-resolution of $K$. Define  $\theta: \mathcal{H}\rightarrow \sum_{i\in I}\oplus W_{i}$ by $\theta f=\lbrace \theta_{i}f\rbrace_{i\in I}$. Then $\theta$ is a bounded linear operator and $T_{W}\theta f= \sum_{i\in I}\theta_{i}f=Kf$. Moreover,
\begin{eqnarray*}
\Vert K^{*}f\Vert^{2} \leq \Vert \theta\Vert^{2}\Vert T_{W}^{*}f\Vert^{2},
\end{eqnarray*}
which follows that
\begin{eqnarray*}
\Vert X_{w}\Vert ^{2} = \inf \{\alpha>0, \Vert K^{*}f\Vert ^{2}\leq \alpha\Vert T_{W}^{*}f\Vert ^{2}; f\in \mathcal{H}\},
\end{eqnarray*}
as required. On the other hand
\begin{eqnarray*}
\sum_{i\in I}\|\pi_{W_i}(X_{w}f)_{i}f-\omega_{i}\pi_{W_{i}}f\|^2 &=& \sum_{i\in I}\|\pi_{W_i}(X_{w}f)_{i}\|^2\\
&+&\sum_{i\in I}\omega_{i}^{2}\|\pi_{W_{i}}f\|^2 - 2 Re\sum_{i\in I}\langle \omega_{i}\pi_{W_i}(X_{w}f)_{i}f, f\rangle\\
&\leq&  \sum_{i\in
I}\|\theta_{i}f\|^2 + \sum_{i\in I}\omega_{i}^{2}\|\pi_{W_{i}}f\|^2 \\
&-& 2  Re\langle Kf, f\rangle
= \sum_{i\in I}\|\theta_{i}f-\omega_{i}\pi_{W_{i}}f\|^2.
\end{eqnarray*}
This completes the proof.
\end{proof}
 In the  case $K=I_{\mathcal{H}}$, the above theorem reduces
to a result in \cite{khosravi15}. As a result of  Theorem \ref{minimal norm}, we can obtain  the pseudo-inverse of  the bounded operator $\pi_{R(K)}T_{W}$.
\begin{cor}
Suppose that $\lbrace
(W_{i},\omega_{i})\rbrace_{i\in I}$ is a $K$-fusion frame for $\mathcal{H}$. Then the pseudo-inverse operator $(\pi_{R(K)}T_{W})^{\dagger}: R(K)\rightarrow \sum_{i\in I}\oplus W_{i}$ is given by
\begin{eqnarray*}
(\pi_{R(K)}T_{W})^{\dagger}f = \{\omega_{i}\pi_{W_i}(X_{w}f)_{i}\}_{i\in I}, \quad (f\in R(K)).
\end{eqnarray*}
\end{cor}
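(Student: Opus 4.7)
The plan is to identify the map $Bf := \{\omega_i \pi_{W_i}(X_w f)_i\}_{i\in I}$ with $(\pi_{R(K)}T_W)^{\dagger}f$ for $f \in R(K)$ via the minimum--norm characterization of the Moore--Penrose pseudo-inverse. Write $A := \pi_{R(K)}T_W$ for brevity.

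First I would check that $A : \sum_{i\in I}\oplus W_i \to R(K)$ is surjective with closed range. Proposition \ref{2.3.} gives $S_W \geq \alpha KK^*$ for some $\alpha > 0$, and Theorem \ref{equ0} then forces $R(K) \subseteq R(S_W) \subseteq R(T_W)$, so applying $\pi_{R(K)}$ yields $R(A) = R(K)$, which is closed because $K$ has closed range. In particular $A^\dagger$ exists as a bounded operator from $R(K)$ into $\sum_{i\in I}\oplus W_i$.

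Second I would verify the right-inverse identity $AB = I_{R(K)}$. This is a short computation starting from the Douglas relation $T_W X_w = K$ in Theorem \ref{equ0} together with the identity $KK^\dagger = \pi_{R(K)}$ valid for closed-range $K$: for $f \in R(K)$ write $f = K(K^\dagger f)$ and push the $\omega_i \pi_{W_i}$ factors through the synthesis operator, using that $(X_w g)_i \in W_i$ so the projections $\pi_{W_i}$ are compatible with the componentwise action.

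The essential step is the minimum--norm property of $B$. Here I would invoke Theorem \ref{minimal norm}: it says that the $l^{2}$-resolution of $K$ produced by $X_w$ has the smallest total norm among all $l^{2}$-resolutions of $K$ into $\{W_i\}_{i\in I}$. Given an arbitrary $c \in A^{-1}\{f\}$, I would interpret $c$ as the evaluation at $K^\dagger f$ of such an $l^{2}$-resolution; Theorem \ref{minimal norm} then yields $\|Bf\| \leq \|c\|$. Uniqueness of the minimum--norm right-inverse of a closed-range operator completes the identification $B = A^\dagger|_{R(K)}$.

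The one nontrivial obstacle is promoting a single preimage $c$ of $f$ under $A$ into a bounded, globally defined $l^{2}$-resolution of $K$ so that Theorem \ref{minimal norm} becomes applicable. The natural workaround is to take the extension equal to $X_w$ on the orthogonal complement of the line through $K^\dagger f$ and to adjust only along this line so as to match $c$ at $K^\dagger f$; the range property $R(X_w) \subseteq \overline{R(T_W^*)}$ from Theorem \ref{equ0}(c) ensures the extension stays bounded and preserves the resolution identity $T_W\theta = K$, after which the norm comparison from Theorem \ref{minimal norm} is immediate.
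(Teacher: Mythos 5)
Your strategy mirrors the paper's own proof: show $A:=\pi_{R(K)}T_{W}$ maps onto $R(K)$, check that the proposed sequence is a preimage of $f$, and then identify it with $A^{\dagger}f$ via the minimal-norm characterization of the pseudo-inverse combined with Theorem \ref{minimal norm} (the paper gets surjectivity from injectivity of $T_{W}^{*}|_{R(K)}$ and cites Beutler--Root for the minimal-norm characterization; your Douglas-based surjectivity argument is fine). The genuine gap sits exactly at the obstacle you flagged, and your workaround does not close it. An arbitrary $c\in A^{-1}\{f\}$ satisfies only $\pi_{R(K)}T_{W}c=f$; in general $T_{W}c=f+h$ with $0\neq h\in R(K)^{\perp}$. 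Hence the rank-one modification of $X_{w}$ that sends $K^{\dagger}f$ to $c$ violates the resolution identity $Kg=\sum_{i}\omega_{i}^{2}\theta_{i}g$ on the line through $K^{\dagger}f$ (it holds only after applying $\pi_{R(K)}$), so the modified family is not an $l^{2}$-resolution of $K$ and Theorem \ref{minimal norm} cannot be invoked for it. What Theorem \ref{minimal norm} genuinely yields is minimality of $X_{w}f$ over solutions of the \emph{unprojected} equation $T_{W}c=Kf$, i.e. over the affine set $X_{w}f+N(T_{W})$; the pseudo-inverse of $A$ demands minimality over the strictly larger set $X_{w}f+N(\pi_{R(K)}T_{W})$, equivalently $X_{w}f\in\overline{T_{W}^{*}(R(K))}$, which is stronger than the Douglas property $R(X_{w})\subseteq\overline{R(T_{W}^{*})}$ of Theorem \ref{equ0}(c).

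This is not a removable technicality: take $\mathcal{H}=\mathbb{R}^{2}$, $K=\pi_{\mathrm{span}\{e_{1}\}}$, $W_{1}=\mathrm{span}\{e_{1}\}$, $W_{2}=\mathrm{span}\{e_{1}+e_{2}\}$, $\omega_{1}=\omega_{2}=1$; this is a $K$-fusion frame and $T_{W}$ is injective, so $X_{w}e_{1}=(e_{1},0)$ is the \emph{only} solution of $T_{W}c=Ke_{1}$ and has norm $1$, whereas the minimal-norm solution of $\pi_{R(K)}T_{W}c=e_{1}$ is $\bigl(\frac{2}{3}e_{1},\frac{1}{3}(e_{1}+e_{2})\bigr)$ with norm $\sqrt{2/3}<1$; thus $(\pi_{R(K)}T_{W})^{\dagger}e_{1}\neq\{(X_{w}e_{1})_{i}\}$, and no bridge from Theorem \ref{minimal norm} to the claimed identity can exist without an additional hypothesis (for instance $S_{W}(R(K))\subseteq R(K)$, under which $X_{w}f=T_{W}^{*}(S_{W}|_{R(K)})^{-1}f\in T_{W}^{*}(R(K))$ and the identification does go through). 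Be aware that the paper's own proof makes the same leap---it invokes Theorem \ref{minimal norm} without enlarging the comparison class from resolutions of $K$ to all preimages under $\pi_{R(K)}T_{W}$---so you have reproduced its argument together with its weak point, but the proposed extension trick does not repair it. A secondary, inherited issue: keeping the factor $\omega_{i}$ in $Bf$ makes even the right-inverse identity fail, since $ABf=\pi_{R(K)}\sum_{i}\omega_{i}^{2}(X_{w}f)_{i}$, which differs from $T_{W}X_{w}f=Kf$ unless all $\omega_{i}=1$; the paper's computation tacitly drops this weight.
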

\begin{proof}
For a $K$-fusion frame $W = \lbrace
(W_{i},\omega_{i})\rbrace_{i\in I}$ we can easily survey that $T_{W}^{*}\vert_{R(K)}$ is a one to one operator, so the operator $\pi_{R(K)}T_{W}:  \sum_{i\in I}\oplus W_{i} \rightarrow R(K)$ is onto. Let $f\in \mathcal{H}$, by Corollary 1.1  in \cite{Berut76},  the equation
$Kf = \pi_{R(K)}T_{W}\{f_{i}\}_{i\in I}$ has a unique solution of  minimal norm and this solution is $(\pi_{R(K)}T_{W})^{\dagger}Kf$. On the other hand,
\begin{eqnarray*}
\pi_{R(K)}T_{W} \{\pi_{W_i}(X_{w}f)_{i}\}_{i\in I} 
= \pi_{R(K)}T_{W}X_{w}f =Kf.
\end{eqnarray*}
Thus,  the result follows  by Theorem \ref{minimal norm}.
\end{proof}

\section{Perturbation of $K$-fusion frames}

\smallskip
\goodbreak
In fusion  frame theory, the elements of underlying Hilbert spaces are distributed to a family of closed subspaces. These elements can be reconstructed by dual fusion frames  such as (\ref{Def:alt}).
 In real
applications, under these transmissions usually a part of the data vectors
change or reshape, in the other words, the various disturbances and perturbations  affect on the information.
In this respect,  stability of fusion frames and dual fusion frames  under perturbations has a key role in
practice.  
In this
section, we study the  robustness of $K$-fusion frames and their $K$-duals under some perturbations.
 \begin{thm}\label{per1} Let $W=\lbrace
(W_{i},\omega_{i})\rbrace_{i\in I}$ be a $K$-fusion frame for $\mathcal{H}$
with bounds $A$ and $B$, respectively. Also, let $Z=\lbrace (Z_{i},z_{i})\rbrace_{i\in I}$ be a $(\lambda_{1},
\lambda_{2},\varepsilon)$-perturbation of $W$  for some
 $0 <
\lambda_{1}, \lambda_{2} < 1$ and $\varepsilon >0$, i.e.,
\begin{eqnarray*} \Vert
(\omega_{i}\pi_{W_{i}}-z_{i}\pi_{Z_{i}})f\Vert \leq \lambda_{1}\Vert
\omega_{i}\pi_{W_{i}}f\Vert+\lambda_{2}\Vert
z_{i}\pi_{Z_{i}}\Vert+\varepsilon \omega_{i}\Vert K^{*}f\Vert,
\end{eqnarray*} for all $i\in I$ and $f\in \mathcal{H}$
 such that
\begin{eqnarray}\label{varepsilon}
\varepsilon < \frac{(1-\lambda_{1})\sqrt{A}}{\Vert K\Vert(\sum_{i\in
I}\omega_{i}^{2})^{1/2}}
\end{eqnarray}
Then $Z$ is a $K$-fusion frame
for $\mathcal{H}$. \end{thm}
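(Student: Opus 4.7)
My plan is to derive, from the pointwise perturbation estimate, two comparable pointwise estimates relating $\|z_i\pi_{Z_i}f\|$ and $\|\omega_i\pi_{W_i}f\|$, then lift them to $\ell^2$ via the triangle inequality on sequences, and finally invoke the $K$-fusion frame bounds of $W$ together with $\|K^*f\|\le\|K\|\,\|f\|$.

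The first step is a purely algebraic rearrangement. Starting from
\begin{equation*}
\|(\omega_i\pi_{W_i}-z_i\pi_{Z_i})f\|\le\lambda_1\|\omega_i\pi_{W_i}f\|+\lambda_2\|z_i\pi_{Z_i}f\|+\varepsilon\omega_i\|K^*f\|,
\end{equation*}
the reverse triangle inequality applied in both directions yields
\begin{equation*}
\|z_i\pi_{Z_i}f\|\le\tfrac{1+\lambda_1}{1-\lambda_2}\|\omega_i\pi_{W_i}f\|+\tfrac{\varepsilon}{1-\lambda_2}\omega_i\|K^*f\|,
\end{equation*}
\begin{equation*}
\|\omega_i\pi_{W_i}f\|\le\tfrac{1+\lambda_2}{1-\lambda_1}\|z_i\pi_{Z_i}f\|+\tfrac{\varepsilon}{1-\lambda_1}\omega_i\|K^*f\|.
\end{equation*}
Both use only $0<\lambda_1,\lambda_2<1$, so the denominators are strictly positive.

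Next I lift to $\ell^2$. Viewing $\{\|z_i\pi_{Z_i}f\|\}_i$, $\{\|\omega_i\pi_{W_i}f\|\}_i$, and $\{\omega_i\|K^*f\|\}_i$ as elements of $\ell^2(I)$ and applying the triangle inequality in $\ell^2$ to the first estimate, one gets an upper bound
\begin{equation*}
\Bigl(\sum_i\!\|z_i\pi_{Z_i}f\|^2\Bigr)^{1/2}\le\tfrac{1+\lambda_1}{1-\lambda_2}\sqrt{B}\,\|f\|+\tfrac{\varepsilon}{1-\lambda_2}\,\|K\|\,\Bigl(\sum_i\omega_i^2\Bigr)^{1/2}\|f\|,
\end{equation*}
where I use the upper bound $B$ of $W$ and $\|K^*f\|\le\|K\|\,\|f\|$; note the condition $(\ref{varepsilon})$ tacitly forces $\sum_i\omega_i^2<\infty$, so this gives a finite Bessel bound for $Z$. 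Applying the same $\ell^2$-triangle technique to the second pointwise estimate produces
\begin{equation*}
\sqrt{A}\,\|K^*f\|\le\Bigl(\sum_i\omega_i^2\|\pi_{W_i}f\|^2\Bigr)^{1/2}\le\tfrac{1+\lambda_2}{1-\lambda_1}\Bigl(\sum_i\!\|z_i\pi_{Z_i}f\|^2\Bigr)^{1/2}+\tfrac{\varepsilon}{1-\lambda_1}\|K^*f\|\Bigl(\sum_i\omega_i^2\Bigr)^{1/2},
\end{equation*}
and then I isolate $\|K^*f\|$.

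The main (and really only) obstacle is checking that the coefficient $\sqrt{A}-\tfrac{\varepsilon}{1-\lambda_1}(\sum\omega_i^2)^{1/2}$ coming out on the left after rearrangement is strictly positive, and this is exactly where hypothesis $(\ref{varepsilon})$ is used; the extra $\|K\|$ in the denominator of $(\ref{varepsilon})$ is harmless (slightly stronger than needed for the lower bound, but convenient for simultaneously controlling the upper bound constant). Dividing through then yields a positive lower $K$-fusion frame bound
\begin{equation*}
\tfrac{(1-\lambda_1)^2}{(1+\lambda_2)^2}\bigl(\sqrt{A}-\tfrac{\varepsilon}{1-\lambda_1}(\sum_i\omega_i^2)^{1/2}\bigr)^2\|K^*f\|^2\le\sum_i z_i^2\|\pi_{Z_i}f\|^2,
\end{equation*}
which, together with the Bessel estimate above, shows $Z$ is a $K$-fusion frame for $\mathcal H$.
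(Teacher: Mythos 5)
Your argument is correct and is essentially the paper's own proof: the same triangle-inequality manipulation of the perturbation estimate, lifted to $\ell^{2}$ via Minkowski and combined with the bounds of $W$ and $\Vert K^{*}f\Vert\leq\Vert K\Vert\,\Vert f\Vert$; you merely rearrange pointwise before summing, whereas the paper sums first and rearranges afterwards, and the resulting Bessel and lower constants coincide. One caveat: your parenthetical that the factor $\Vert K\Vert$ in (\ref{varepsilon}) makes the hypothesis ``slightly stronger than needed'' is only true when $\Vert K\Vert\geq 1$; for $\Vert K\Vert<1$ condition (\ref{varepsilon}) does not force $\varepsilon(\sum_{i\in I}\omega_{i}^{2})^{1/2}<(1-\lambda_{1})\sqrt{A}$, so positivity of your lower constant does not follow --- though the paper's proof has exactly the same defect, since the lower bound it displays with $\varepsilon\Vert K\Vert(\sum_{i\in I}\omega_{i}^{2})^{1/2}$ is not what its preceding computation actually yields.
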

\begin{proof}
We first show the existence of a Bessel bound for $Z$. Let  $f\in \mathcal{H}$, 
\begin{eqnarray*}
&&\left(\sum_{i\in I}
z_{i}^{2}\Vert \pi_{Z_{i}}f\Vert^{2}\right)^{1/2}\\
&=&\left(\sum_{i\in I}
\Vert z_{i}\pi_{Z_{i}}f+\omega_{i}\pi_{W_{i}}f-\omega_{i}\pi_{W_{i}}f\Vert^{2}\right)^{1/2}\\
&\leq& \left(\sum_{i\in I} \left( \omega_{i}\Vert
\pi_{W_{i}}f\Vert+\lambda_{1}\Vert
\omega_{i}\pi_{W_{i}}f\Vert+\lambda_{2}\Vert
z_{i}\pi_{Z_{i}}f\Vert+\varepsilon \omega_{i}\Vert
K^{*}f\Vert \right)^{2}\right)^{1/2}\\
&\leq& (1+\lambda_{1})\left(\sum_{i\in
I} \omega_{i}^{2}\Vert
\pi_{W_{i}}f\Vert^{2}\right)^{1/2}+\lambda_{2}\left(\sum_{i\in I} z_{i}^{2}\Vert \pi_{Z_{i}}f\Vert^{2}\right)^{1/2}\\
&+& \left(\sum_{i\in I}\omega_{i}^{2}\right)^{1/2}\varepsilon \Vert K^{*}f\Vert,  \end{eqnarray*}
for every $f\in \mathcal{H}$. Using (\ref{varepsilon}), shows that \begin{eqnarray*} \sum_{i\in I} z_{i}^{2}\Vert \pi_{Z_{i}}f\Vert^{2}
\leq \left(\dfrac{(1+\lambda_{1})\sqrt{B}+\varepsilon \Vert K\Vert
\left(\sum_{i\in I}\omega_{i}^{2}\right)^{1/2}}{1-\lambda_{2}}\right)^{2}
\Vert f\Vert^{2}.
\end{eqnarray*}
Now, it is sufficient to find a lower bound. In fact,
\begin{eqnarray*} &&\left(\sum_{i\in I} z_{i}^{2}\left\Vert
\pi_{Z_{i}}f\right\Vert^{2}\right)^{1/2}\\
&=& \left(\sum_{i\in I} \left\Vert
z_{i}\pi_{Z_{i}}f+\omega_{i}\pi_{W_{i}}f-\omega_{i}\pi_{W_{i}}f\right\Vert^{2}\right)^{1/2}
\\ &\geq& \left(\sum_{i\in I} \left(\omega_{i}\Vert
\pi_{W_{i}}f\Vert-\lambda_{1}\Vert \omega_{i}\pi_{W_{i}}f-\lambda_{2}\Vert
z_{i}\pi_{Z_{i}}\Vert-\varepsilon \omega_{i}\Vert
K^{*}f\Vert\right)^{2}\right)^{1/2}\\ &\geq& (1-\lambda_{1})\left(\sum_{i\in
I} \omega_{i}^{2}\Vert
\pi_{W_{i}}f\Vert^{2}\right)^{1/2}-\lambda_{2}\left(\sum_{i\in I}
z_{i}^{2}\Vert \pi_{Z_{i}}f\Vert^{2}\right)^{1/2}\\ &-& \left(\sum_{i\in
I}\omega_{i}^{2}\right)^{1/2}\varepsilon \Vert K^{*}f\Vert. \end{eqnarray*}
Therefore  \begin{eqnarray*} \sum_{i\in I} z_{i}^{2}\Vert
\pi_{Z_{i}}f\Vert^{2} \geq \left(\dfrac{(1-\lambda_{1})\sqrt{A}-\varepsilon
\Vert K\Vert \left(\sum_{i\in
I}\omega_{i}^{2}\right)^{1/2}}{1+\lambda_{2}}\right)^{2} \Vert
K^{*}f\Vert^{2}.
\end{eqnarray*} This completes the proof. \end{proof} Take $\omega_{i}=z_{i}$
for all $i\in I$ and $K=I_{\mathcal{H}}$, then Theorem \ref{per1} reduces  in
Proposition 5.2 of \cite{Cas08}. In the next theorem we show that under  small perturbations, $K$-duals of a $K$-fusion frame turn to the  approximate $K$-dual for perturbed $K$-fusion frame. Let  $W=\lbrace
(W_{i},\omega_{i})\rbrace_{i\in I}$ be a $K$-fusion frame for $\mathcal{H}$.  A Bessel fusion sequence $V=\lbrace(V_{i},\nu_{i})\rbrace_{i\in I}$ is called  an approximate $K$-dual of  $W$ whenever $\Vert K -  T_{W}\phi_{vw}T^{*}_{V}\  \Vert<1$.
 Approximate duals  was first  introduced in \cite{app.} for discrete frames and
are  important tools for reconstruction algorithms.

\begin{thm}\label{last} Let $W=\lbrace
(W_{i},\omega_{i})\rbrace_{i\in I}$ be a $K$-fusion frame for $\mathcal{H}$
with bounds $A$ and $B$, respectively. Also, let $\{Z_{i}\}_{i\in I}$ be a family of closed subspaces in $\mathcal{H}$.
 \begin{eqnarray*} \Vert
(T^{*}_{W}-T^{*}_{Z})f\Vert \leq \varepsilon \Vert K^{*}f\Vert, \end{eqnarray*}
for some   $\varepsilon> 0$. Then,
\begin{itemize}
\item[(i)]
If $0 < \varepsilon < \sqrt{A}$, then $Z = \lbrace
(Z_{i},\omega_{i})\rbrace_{i\in I}$ is a $K$-fusion frame for $\mathcal{H}$ with the bounds $(\sqrt{A}-\varepsilon)$ and
$(\sqrt{B} + \varepsilon \Vert K\Vert)$, respectively.
\item[(ii)]
Every $K$-dual $V=\lbrace(V_{i},\omega_{i})\rbrace_{i\in I}$ of $W$ is an approximate $K$-dual of  $Z$, for  $\varepsilon> 0$  sufficiently small, .
\end{itemize}
 \end{thm}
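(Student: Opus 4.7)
For part (i), my plan is to apply the direct and reverse triangle inequalities to the identity $T_Z^{*} = T_W^{*}-(T_W^{*}-T_Z^{*})$. Combining the perturbation hypothesis $\|(T_W^{*}-T_Z^{*})f\|\le \varepsilon\|K^{*}f\|$ with the $K$-fusion frame bounds $\sqrt{A}\,\|K^{*}f\|\le \|T_W^{*}f\|\le \sqrt{B}\,\|f\|$ for $W$ yields
\[
(\sqrt{A}-\varepsilon)\|K^{*}f\|\;\le\; \|T_Z^{*}f\|\;\le\; (\sqrt{B}+\varepsilon\|K\|)\|f\|
\]
for every $f\in\mathcal{H}$. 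The restriction $\varepsilon<\sqrt{A}$ precisely keeps the lower constant positive, and squaring both sides gives the $K$-fusion frame inequality for $Z$ with the stated bounds. This step is a routine triangle-inequality computation.

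For part (ii), by (i) the sequence $Z$ is itself a $K$-fusion frame as soon as $\varepsilon<\sqrt{A}$, so $S_Z$ and the operator $\phi_{vz}\{f_i\}_{i\in I}=\{\pi_{Z_i}(S_Z^{-1})^{*}Kf_i\}_{i\in I}$ are well-defined; the approximate $K$-dual condition for $V$ relative to $Z$ should thus be read as $\|K-\pi_{R(K)}T_Z\phi_{vz}T_V^{*}\|<1$, consistent with the exact $K$-dual equation. Since $V$ is an exact $K$-dual of $W$, that is $K=\pi_{R(K)}T_W\phi_{vw}T_V^{*}$, I would substitute this identity and decompose
\[
K-\pi_{R(K)}T_Z\phi_{vz}T_V^{*} \;=\; \pi_{R(K)}(T_W-T_Z)\phi_{vw}T_V^{*} \;+\; \pi_{R(K)}T_Z(\phi_{vw}-\phi_{vz})T_V^{*}.
\]
The first summand is immediately $O(\varepsilon)$, since $\|T_W-T_Z\|=\|T_W^{*}-T_Z^{*}\|\le \varepsilon\|K\|$ by duality of the hypothesis, while $\phi_{vw}$, $T_V^{*}$, and $\pi_{R(K)}$ have norms independent of $\varepsilon$.

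The main obstacle is controlling the second summand, since $\phi_{vw}$ and $\phi_{vz}$ involve the inverses of distinct frame operators acting on the genuinely distinct subspaces $S_W(R(K))$ and $S_Z(R(K))$. The cleanest route is the resolvent-style identity $S_Z^{-1}-S_W^{-1}=S_Z^{-1}(S_W-S_Z)S_W^{-1}$ on the relevant invariant subspace, combined with the bound $\|S_W-S_Z\|\le \|T_W\|\,\|T_W^{*}-T_Z^{*}\|+\|T_Z\|\,\|T_W-T_Z\|=O(\varepsilon)$ and the uniform boundedness of $\|S_Z^{-1}\|$ on $S_Z(R(K))$ furnished by the lower bound in (i). Together these yield $\|\phi_{vw}-\phi_{vz}\|=O(\varepsilon)$, so the full difference has norm $O(\varepsilon)$, which is strictly less than $1$ for $\varepsilon$ sufficiently small; this completes (ii).
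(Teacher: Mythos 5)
Your part (i) is precisely the paper's argument: the same triangle-inequality estimate on $T_Z^{*}=T_W^{*}-(T_W^{*}-T_Z^{*})$ with the frame bounds of $W$, so nothing to add there. For part (ii) your overall strategy also parallels the paper's: substitute the exact identity $K=\pi_{R(K)}T_W\phi_{vw}T_V^{*}$, telescope the difference into two terms, and take $\varepsilon$ small; the paper performs the mirror telescoping on the adjoint expression applied to a vector $f$. Your first summand is sound, since $\Vert T_W-T_Z\Vert=\Vert T_W^{*}-T_Z^{*}\Vert\leq\varepsilon\Vert K\Vert$ (viewing both synthesis operators on $\sum_{i\in I}\oplus\mathcal{H}$) and the remaining factors do not depend on $\varepsilon$.

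The gap is in the claim $\Vert\phi_{vw}-\phi_{vz}\Vert=O(\varepsilon)$. Componentwise, $\pi_{W_i}(S_W^{-1})^{*}K-\pi_{Z_i}(S_Z^{-1})^{*}K=(\pi_{W_i}-\pi_{Z_i})(S_W^{-1})^{*}K+\pi_{Z_i}\bigl((S_W^{-1})^{*}-(S_Z^{-1})^{*}\bigr)K$, so besides the difference of inverse frame operators you must control $\pi_{W_i}-\pi_{Z_i}$ uniformly in $i$; the hypothesis only yields $\Vert\pi_{W_i}-\pi_{Z_i}\Vert\leq\varepsilon\Vert K\Vert/\omega_i$, which is not uniformly $O(\varepsilon)$ unless $\inf_i\omega_i>0$, an assumption the theorem does not make. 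The paper's telescoping sidesteps this because in its second term the factor $\pi_{W_i}-\pi_{Z_i}$ acts on the common vector $\pi_{R(K)}f$, where the hypothesis applies in aggregate via $\sum_i\omega_i^{2}\Vert(\pi_{W_i}-\pi_{Z_i})\pi_{R(K)}f\Vert^{2}\leq\varepsilon^{2}\Vert K^{*}\pi_{R(K)}f\Vert^{2}$; if you keep your decomposition you need either the weight lower bound or a re-telescoping of this kind. Moreover, the resolvent identity $S_Z^{-1}-S_W^{-1}=S_Z^{-1}(S_W-S_Z)S_W^{-1}$ does not immediately give what you need, because here $S_W^{-1}$ and $S_Z^{-1}$ are inverses of $S_W:R(K)\to S_W(R(K))$ and $S_Z:R(K)\to S_Z(R(K))$, and the relevant difference $S_W^{-1}\pi_{S_W(R(K))}-S_Z^{-1}\pi_{S_Z(R(K))}$ also contains the difference of the projections onto these two distinct subspaces, which requires a separate gap-type estimate. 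To be fair, the paper does not prove this smallness either: it leaves $\Vert((S_W^{-1})^{*}-(S_Z^{-1})^{*})K\Vert$ inside the admissible threshold for $\varepsilon$, implicitly assuming $\Vert((S_W^{-1})^{*}-(S_Z^{-1})^{*})K\Vert^{2}B<1/2$. Your plan is more ambitious on exactly this point, but as sketched it does not close it.
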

\begin{proof} For every $f\in \mathcal{H}$, we can write \begin{eqnarray*}
(\sqrt{A}-\varepsilon) \Vert K^{*}f\Vert &\leq& \Vert T^{*}_{W}f\Vert -\Vert
T^{*}_{Z}f-T^{*}_{W}f\Vert\\ &\leq& \Vert T^{*}_{Z}f\Vert \leq \Vert
T^{*}_{W}f\Vert +\Vert T^{*}_{Z}f-T^{*}_{W}f\Vert
\\ &\leq& (\sqrt{B} + \varepsilon \Vert K\Vert)\Vert f\Vert, \end{eqnarray*}
This shows $(i)$. Moreover, let  $V=\lbrace(V_{i},\omega_{i})\rbrace_{i\in I}$ be a $K$-dual  of $W$. Then
\begin{eqnarray*}
&& \left\Vert K^{*}f - T_{V}\phi_{vz}^{*}T_{Z}^{*}\pi_{R(K)}f\right\Vert^{2}\\
&=& \left\Vert K^{*}f -\sum_{i\in I}\omega_{i}^{2}\pi_{V_{i}}K^{*}S_{Z}^{-1}\pi_{S_{Z}(R(K))}\pi_{Z_{i}}\pi_{R(K)}f
\right \Vert^{2}\\
&=&\left \Vert \sum_{i\in I}\omega_{i}^{2}\pi_{V_{i}}K^{*}S_{W}^{-1}\pi_{S_{W}(R(K))}\pi_{W_{i}}f
-\sum_{i\in I}\omega_{i}^{2}\pi_{V_{i}}K^{*}S_{Z}^{-1}\pi_{S_{Z}(R(K))}\pi_{Z_{i}}\pi_{R(K)}f
\right \Vert^{2}\\
&\leq&  2\left \Vert \sum_{i\in I}\omega_{i}^{2}\pi_{V_{i}}K^{*}(S_{W}^{-1}\pi_{S_{W}(R(K))}-S_{Z}^{-1}\pi_{S_{Z}(R(K))})\pi_{W_{i}}\pi_{R(K)}f\right \Vert^{2}\\
&+&2\left\Vert \sum_{i\in I}\omega_{i}^{2}\pi_{V_{i}}K^{*}S_{Z}^{-1}\pi_{S_{Z}(R(K))}(\pi_{W_{i}}-\pi_{Z_{i}})\pi_{R(K)}f
\right \Vert^{2}\\
&\leq& 2\left( \left\Vert \left((S_{W}^{-1})^{*}-(S_{Z}^{-1})^{*}\right)K\right\Vert^{2}B\left\Vert f \right\Vert^{2} + \varepsilon\left\Vert (S_{Z}^{-1})^{*}K\right\Vert^{2}\left\Vert K\right\Vert^{2}\left\Vert f\right\Vert^{2}
\right),
\end{eqnarray*}
Now, suppose that
\begin{eqnarray*}
0 < \varepsilon < \min \left( \sqrt{A}, \dfrac{\dfrac{1}{2} -\left\Vert \left((S_{W}^{-1})^{*}-(S_{Z}^{-1})^{*}\right)K\right\Vert^{2}B}{\Vert (S_{Z}^{-1})^{*}K\Vert^{2}\Vert K\Vert^{2}} \right).
\end{eqnarray*}
Therefore,
\begin{eqnarray*}
\Vert K^{*}f - T_{V}\phi_{vz}^{*}T_{Z}^{*}\pi_{R(K)}f\Vert< \Vert f\Vert,
\end{eqnarray*}
for every $f\in \mathcal{H}$.  This implies that  $\Vert K -  T_{Z}\phi_{vz}T^{*}_{V}\pi_{R(K)}\Vert< 1$, as required.
\end{proof}
\begin{ex}
Suppose that $\mathcal{H}$, $K$ and  $W=\lbrace (W_{i},
\omega_{i})\rbrace_{i=1}^{3}$  are as in Example \ref{example2}. Also, let
\begin{eqnarray*}
Z_{1}=W_{1}, \quad Z_{2}=W_{2}\oplus W_{3}, \quad Z_{3}=W_{3}.
\end{eqnarray*}
Then, the Bessel fusion  sequence $Z  = \{(Z_{i},1) \}_{i=1}^{3}$ satisfies
\begin{eqnarray*}
\Vert
(T^{*}_{W}-T^{*}_{Z})f\Vert   < 1/2  \Vert K^{*}f\Vert,
\end{eqnarray*}
for every $f=(a,b, c)\in \mathcal{H}$, i.e., $Z$ is an $\varepsilon$-perturbation of $W$ with $\varepsilon  = 1/2$. Thus $Z$ is a $K$-fusion frame by Theorem \ref{last} (i). Now, a direct calculation shows that
\begin{equation*}
S_{Z}\pi_{R(K)} = \left[
 \begin{array}{ccc}

3/2 \quad 3/2 \quad 0\\

3/2 \quad 3/2 \quad 0\\

0 \quad 0 \quad 2\\

\end{array} \right],
\end{equation*}
and consequently
\begin{equation*}
S_{Z}^{-1}\pi_{S_{Z}(R(K))} = \left[
 \begin{array}{ccc}

1/6 \quad 1/6 \quad 0\\

1/6 \quad 1/6 \quad 0\\

0 \quad 0 \quad 1/2\\

\end{array} \right].
\end{equation*}
Hence, $\Vert K^{*}(S_{Z}^{-1}-S_{W}^{-1})\Vert =1/6$ and $\Vert (S_{Z}^{-1})^{*}K\Vert = 1/3$. Also, we have $A=1$, $B=2$ and $\Vert K\Vert=1$. Thus
\begin{eqnarray*}
\min \left( \sqrt{A}, \dfrac{\dfrac{1}{2} -\left\Vert \left((S_{W}^{-1})^{*}-(S_{Z}^{-1})^{*}\right)K\right\Vert^{2}B}{\Vert (S_{Z}^{-1})^{*}K\Vert^{2}\Vert K\Vert^{2}} \right) = 1.
\end{eqnarray*}
Therefore,  every $K$-dual  $V=\lbrace (V_{i},
1)\rbrace_{i=1}^{3}$  of $W$ is an approximate $K$-dual of $Z$, moreover $\Vert K -  T_{Z}\phi_{vz}T^{*}_{V}\pi_{R(K)} \Vert < 1/4$.
\end{ex}



\bibliographystyle{amsplain}

\end{document}